\theoremstyle{plain}
\newtheorem{thm}{\protect\theoremname}
  \theoremstyle{definition}
  \newtheorem{defn}[thm]{\protect\definitionname}
  \theoremstyle{remark}
  \newtheorem{rem}[thm]{\protect\remarkname}
  \theoremstyle{plain}
  \newtheorem{cor}[thm]{\protect\corollaryname}
  \theoremstyle{plain}
  \newtheorem{lem}[thm]{\protect\lemmaname}
  \theoremstyle{plain}
  \newtheorem{prop}[thm]{\protect\propositionname}
  \theoremstyle{plain}
  \newtheorem{question}[thm]{\protect\questionname}
\date{}
  \providecommand{\corollaryname}{Corollary}
  \providecommand{\definitionname}{Definition}
  \providecommand{\lemmaname}{Lemma}
  \providecommand{\propositionname}{Proposition}
  \providecommand{\questionname}{Question}
  \providecommand{\remarkname}{Remark}
\providecommand{\theoremname}{Theorem}
\begin{document}
\global\long\def\goinf{\rightarrow\infty}
\global\long\def\gozero{\rightarrow0}
\global\long\def\bra{\langle}
\global\long\def\ket{\rangle}
\global\long\def\union{\cup}
\global\long\def\intersect{\cap}
\global\long\def\abs#1{\left|#1\right|}
\global\long\def\norm#1{\left\Vert #1\right\Vert }
\global\long\def\floor#1{\left\lfloor #1\right\rfloor }
\global\long\def\ceil#1{\left\lceil #1\right\rceil }
\global\long\def\expect{\mathbb{E}}
\global\long\def\e{\mathbb{E}}
\global\long\def\r{\mathbb{R}}
\global\long\def\n{\mathbb{N}}
\global\long\def\q{\mathbb{Q}}
\global\long\def\c{\mathbb{C}}
\global\long\def\z{\mathbb{Z}}
\global\long\def\grad{\nabla}
\global\long\def\t{\mathbb{T}}
\global\long\def\all{\forall}
\global\long\def\eps{\varepsilon}
\global\long\def\quadvar#1{V_{2}^{\pi}\left(#1\right)}
\global\long\def\cal#1{\mathcal{#1}}
\global\long\def\cross{\times}
\global\long\def\del{\nabla}
\global\long\def\parx#1{\frac{\partial#1}{\partial x}}
\global\long\def\pary#1{\frac{\partial#1}{\partial y}}
\global\long\def\parz#1{\frac{\partial#1}{\partial z}}
\global\long\def\part#1{\frac{\partial#1}{\partial t}}
\global\long\def\partheta#1{\frac{\partial#1}{\partial\theta}}
\global\long\def\parr#1{\frac{\partial#1}{\partial r}}
\global\long\def\curl{\nabla\times}
\global\long\def\rotor{\nabla\times}
\global\long\def\one{\mathbf{1}}
\global\long\def\Hom{\text{Hom}}
\global\long\def\pr#1{\text{Pr}\left[#1\right]}
\global\long\def\almost{\mathbf{\approx}}
\global\long\def\tr{\text{Tr}}
\global\long\def\var{\text{Var}}
\global\long\def\onenorm#1{\left\Vert #1\right\Vert _{1}}
\global\long\def\twonorm#1{\left\Vert #1\right\Vert _{2}}
\global\long\def\Inj{\mathfrak{Inj}}
\global\long\def\inj{\mathsf{inj}}

\global\long\def\g{\mathfrak{\cal G}}
\global\long\def\f{\mathfrak{\cal F}}

\title{Brownian motion can feel the shape of a drum}

\author{Renan Gross\thanks{Weizmann Institute of Science. Email: renan.gross@weizmann.ac.il.
Supported by the Adams Fellowship Program of the Israel Academy of
Sciences and Humanities.}}
\maketitle
\begin{abstract}
We study the scenery reconstruction problem on the $d$-dimensional
torus, proving that a criterion on Fourier coefficients obtained by
Matzinger and Lember (2006) for discrete cycles applies also in continuous
spaces. In particular, with the right drift, Brownian motion can be
used to reconstruct any scenery. To this end, we prove an injectivity
property of an infinite Vandermonde matrix. \\
\textbf{}\\
\textbf{Keywords}: Scenery reconstruction problem; infinite Vandermonde
matrix; Brownian motion.
\end{abstract}

\section{Introduction}

\subsection{Background}

In its most general formulation, the scenery reconstruction problem
asks the following: Let $C$ be a set, let be $f$ a function on $C$,
and $\left(X_{t}\right)_{t\geq0}$ a stochastic process taking values
in $C$. What information can we learn about $f$ from the (infinite)
trace $f\left(X_{t}\right)_{t\geq0}$? Can $f$ be completely reconstructed
from this trace?

In one of the most common settings, $C$ is taken to be the discrete
integer graph $\z$, the function $f$ maps $C$ to $\left\{ 0,1\right\} $,
and $X_{t}$ is a discrete-time random walk. For this model, numerous
results exist in the literature for a variety of cases, e.g reconstruction
when $f$ is random \cite{benjamini_kesten_distinguishing_sceneries}
and when $f$ is periodic. In the latter case, $f$ is essentially
defined on a cycle of length $\ell$. Matzinger and Lember showed
the following:
\begin{thm}[{\cite[Theorem 3.2]{matzinger_lember_reconstruction_of_periodic_sceneries_seen_along_a_random_walk}}]
\label{thm:matzinger_lember}Let $f$ be a $2$-coloring of the cycle
of length $\ell$, and let $X_{t}$ be a random walk with step distribution
$\gamma\left(x\right)$. If the Fourier coefficients $\left\{ \hat{\gamma}\left(k\right)\right\} _{k=0}^{\ell-1}$
are all distinct, then $f$ can be reconstructed from the trace $f\left(X_{t}\right)$.
\end{thm}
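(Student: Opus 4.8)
The plan is to extract deterministic statistics of $f$ from the trace and then to invert them using the hypothesis on $\hat\gamma$. Assume without loss of generality that the two colours are $0$ and $1$, so that $f\colon G:=\z/\ell\z\to\{0,1\}$ equals its own indicator function and $f^{2}=f$; the walk can at best reveal $f$ up to a cyclic shift (and, when $\gamma$ is symmetric, a reflection), and it is this that we aim to recover. Since $\hat\gamma$ is constant on the cosets of any subgroup containing the support of $\gamma$, distinctness of the $\hat\gamma(k)$ forces the walk $(X_{t})$ to be irreducible on $G$, with stationary law the uniform measure $\mu$. Hence the Markov-chain ergodic theorem gives, almost surely and from any starting point, that for every $m$, all $0\le t_{1}<\dots<t_{m}$ and all $\vec\eps\in\{0,1\}^{m}$,
\[
\frac1T\sum_{s=0}^{T-1}\one\bigl[f(X_{s+t_{1}})=\eps_{1},\dots,f(X_{s+t_{m}})=\eps_{m}\bigr]\ \longrightarrow\ \mathbb{P}_{\mu}\bigl[f(X_{t_{1}})=\eps_{1},\dots,f(X_{t_{m}})=\eps_{m}\bigr]
\]
as $T\to\infty$. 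There are only countably many such patterns, so almost every trace determines all of these probabilities, and therefore all joint moments $\expect_{\mu}\bigl[f(X_{t_{1}})\cdots f(X_{t_{m}})\bigr]$.

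Next, conditioning on $X_{0}=x$ and expanding $f$ in the characters of $G$, a short computation with the Fourier inversion formula shows that the two-point function is an exponential sum in the (conjugated) Fourier coefficients,
\[
\expect_{\mu}\bigl[f(X_{0})f(X_{n})\bigr]=\frac1{\ell^{2}}\sum_{k=0}^{\ell-1}\abs{\hat f(k)}^{2}\,\overline{\hat\gamma(k)}^{\,n},
\]
that the three-point function is a bivariate exponential sum,
\[
\expect_{\mu}\bigl[f(X_{0})f(X_{n})f(X_{n+p})\bigr]=\frac1{\ell^{3}}\sum_{k,j}\hat f(k)\,\overline{\hat f(j)\,\hat f(k-j)}\;\overline{\hat\gamma(k)}^{\,n}\,\overline{\hat\gamma(k-j)}^{\,p},
\]
and in general that every $m$-point correlation is a multivariate exponential sum whose coefficients are products of $m$ values of $\hat f$ at frequencies summing to zero. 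Because the numbers $\hat\gamma(k)$ are pairwise distinct, so are their conjugates, and so are the pairs $\bigl(\overline{\hat\gamma(k)},\overline{\hat\gamma(k-j)}\bigr)$ and all the analogous tuples occurring above; consequently the linear map sending a coefficient array to its exponential-sum sequence is injective — an injectivity (equivalently, invertibility) property of a Vandermonde matrix, finite in the present setting but infinite in the torus setting treated in this paper. Decoding one variable at a time, the trace then yields the power spectrum $\abs{\hat f(k)}^{2}$ for every $k$, the bispectrum $\hat f(k)\,\overline{\hat f(j)\,\hat f(k-j)}$ for all $k,j$, and likewise all higher-order spectra of $f$.

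It remains to reconstruct $f$ itself from its spectra. Since $\hat f(0)=\abs{f^{-1}(1)}>0$ (the constant case being trivial), set $\phi_{0}=0$ and write $\hat f(k)=\abs{\hat f(k)}e^{i\phi_{k}}$ whenever $\hat f(k)\ne0$; the moduli are known, and the bispectrum supplies every phase combination $\phi_{k}-\phi_{j}-\phi_{k-j}$. If no $\hat f(k)$ vanishes, the relations $\phi_{k}=\phi_{k-1}+\phi_{1}+(\text{known})$ propagate all the phases from the single unknown $\phi_{1}$, and the periodicity constraint $\phi_{\ell}\equiv\phi_{0}$ confines $\phi_{1}$ to exactly $\ell$ admissible values — corresponding to the $\ell$ cyclic shifts of $f$ — so $f$ is reconstructed. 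The main obstacle, and where I expect the genuine work to lie, is the case of vanishing Fourier coefficients, which can break this propagation; one must then re-route through the non-vanishing frequencies using the higher-order spectra found above, and/or exploit the rigidity of $\{0,1\}$-valued functions (the identity $f^{2}=f$), which is presumably why the theorem is stated for $2$-colorings rather than general sceneries. Finally, since any scenery reproducing all the probabilities of the first step reproduces all of the above spectra, it must be a cyclic shift of $f$, whereas conversely every cyclic shift of $f$ reproduces the trace statistics; thus the reconstruction is exactly up to the unavoidable symmetry.
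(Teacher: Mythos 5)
This statement is quoted from Matzinger and Lember; the present paper does not reprove it, but its Theorem \ref{thm:general_reconstruction} is the continuous analogue, and the proof there (Propositions \ref{prop:reconstructing_s_n} and \ref{prop:reconstructing_monomials} plus the grid argument) is the natural template to compare against. Your first two steps essentially coincide with that template and are sound: extract all $m$-point temporal correlations from the trace by an ergodic average, expand each as a multivariate exponential sum in the generators $\hat{\gamma}(k)$, and invert the resulting tensor-product Vandermonde system, which is injective because the generators are pairwise distinct. (Two details to make explicit: distinctness still permits one $k_{0}$ with $\hat{\gamma}(k_{0})=0$, whose coefficient is invisible at all exponents $n\geq1$, so you must also use the exponent $n=0$, i.e.\ degenerate time tuples; and the chain may be periodic, e.g.\ a deterministic rotation, so you should say that the Ces\`{a}ro averages converge by irreducibility alone.)

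The genuine gap is the final step. Reconstructing $f$ from $\abs{\hat{f}}^{2}$ and the bispectrum by propagating phases via $\phi_{k}=\phi_{k-1}+\phi_{1}+(\text{known})$ fails precisely when some $\hat{f}(k)$ vanishes, and vanishing coefficients are completely generic for indicators of subsets of $\z/\ell\z$ (for even $\ell$, $f=\one_{\{0,\ell/2\}}$ kills every odd frequency). You flag this yourself, but the theorem is not proved until it is handled, and ``re-route through the higher spectra'' is not an argument. The repair needs no phase retrieval and is already in your hands: the coefficient arrays you extract from the $m$-point correlations are exactly the Fourier coefficients of the spatial correlations $S_{m}(y_{1},\ldots,y_{m})=\frac{1}{\ell}\sum_{x}f(x)f(x+y_{1})\cdots f(x+y_{1}+\cdots+y_{m})$, so a finite inverse Fourier transform yields $S_{m}(\boldsymbol{y})$ itself for every $\boldsymbol{y}$. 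Then argue by positivity rather than phases: $S_{m}(\boldsymbol{y})>0$ iff some $x$ satisfies $f(x)=f(x+y_{1})=\cdots=1$, so choosing a tuple $\boldsymbol{y}$ with $S_{m}(\boldsymbol{y})>0$ that maximizes the number of distinct partial sums recovers the support of $f$ up to a cyclic shift. This is exactly the closing argument in the proof of Theorem \ref{thm:general_reconstruction}, which on the cycle becomes a finite, exact search instead of a limiting approximation.
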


Finucane, Tamuz and Yaari \cite{finucane_tamuz_yaari_scenery_reconstruction_on_finite_abelian_groups}
considered the problem for finite Abelian groups, and showed that
in many such cases, the above condition on the Fourier coefficients
is necessary. 

The problem can also be posed for a continuous space $C$, such as
$\r^{d}$ or the torus $\mathbb{T}^{d}$, with $X_{t}$ a continuous-time
stochastic process. Here there has been considerably less work; to
the best of our knowledge, at the time of writing this paper there
are only two published results: Detecting ``bells'' \cite{matzinger_popov_detecting_a_local_perturbation_in_a_continuous_scenery}
and reconstructing iterated Brownian motion \cite{kryzsztof_some_path_properties_of_iterated_brownian_motion}.
See \cite{kesten_distinguishing_and_reconstructing_sceneries,matzinger_lember_scenery_reconstruction_an_overview}
and references therein for an overview of the reconstruction problem,
with a focus on $\z$ and $\z^{d}$.

\subsection{Results}

In this paper, we extend Theorem \ref{thm:matzinger_lember} from
the discrete cycle to the continuous $d$-dimensional torus $\t^{d}=\left(\r/2\pi\z\right)^{d}$.
The discrete-time random walks are replaced by continuous-time processes
$X_{t}$ (such as Brownian motion), and the $2$-colorings are replaced
by the indicators $f$ of open sets. For an example of how the sample
paths $f\left(X_{t}\right)$ might look like, see Figure \ref{fig:brownian_example},
where $f$ is the indicator of a union of three intervals on the circle,
and $X_{t}$ is Brownian motion. The goal is to reconstruct the size
and position of the intervals, up to rotations, from the trace $f\left(X_{t}\right)$. 

\begin{figure}[H]
\begin{centering}
\includegraphics[width=0.75\paperwidth]{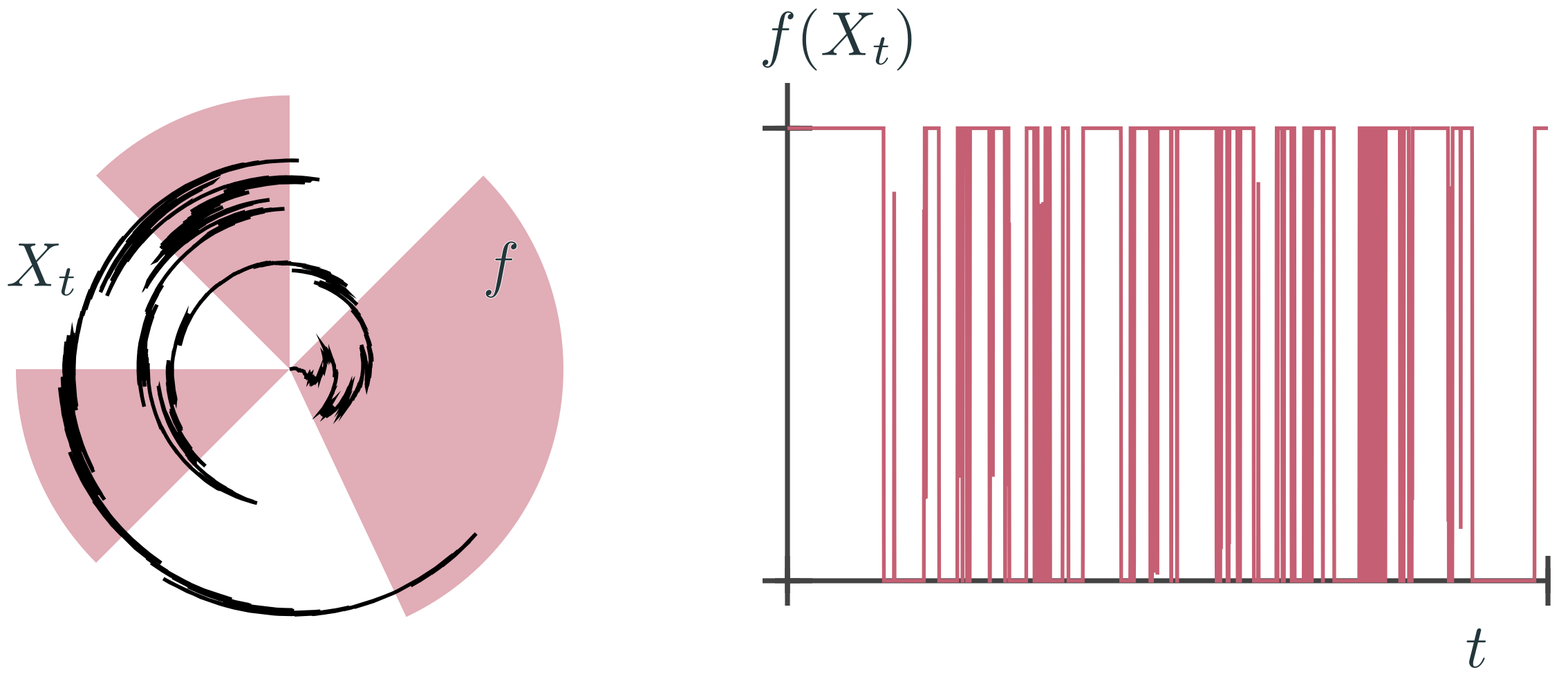}
\par\end{centering}
\caption{\textbf{Left}: Scenery reconstruction in one dimension. The black
curve is a polar depiction of one-dimensional Brownian motion, with
$\theta=X_{t}$ and $r=\sqrt{t}$ (so that points near the center
represent times close $0$, and points near the edge represent larger
times). The function $f$ is represented by the three shaded sectors.
\textbf{Right}: The trace $f\left(X_{t}\right)$. It is equal to $1$
precisely when the curve on the left is inside one of the shaded sectors.\label{fig:brownian_example}}
\end{figure}

\begin{defn}
Let $\mathcal{F}$ be a family of functions on the torus. The family
$\mathcal{F}$ is said to be \emph{reconstructible }by $X_{t}$ if
there is a function $A:\r^{\r^{+}}\to\r^{\mathbb{T}^{d}}$ such that
for every $f\in\mathcal{F}$, with probability $1$ there exists a
(random) shift $\theta\in\t^{d}$ such that $A\left(f\left(X_{t}\right)\right)\left(x\right)=f\left(x+\theta\right)$
for almost all $x$. 
\end{defn}

For the particular case of $d=1$, we also deal with reconstruction
up to reflections:
\begin{defn}
The family $\mathcal{F}$ is said to be \emph{reconstructible up to
reflections} by $X_{t}$, if for all $f\in\mathcal{F}$, with probability
$1$ either $A\left(f\left(X_{t}\right)\right)\left(x\right)=f\left(x+\theta\right)$
for almost all $x$, or $A\left(f\left(X_{t}\right)\right)\left(x\right)=f\left(-x+\theta\right)$
for almost all $x$.
\end{defn}

In order to analyze the trace $f\left(X_{t}\right)$, we must of course
have some control over the behavior of $X_{t}$. In this paper, we
assume that $X_{t}$ is an infinitely-divisible process with independent
increments (this is the natural analog of a discrete-time random walk
with independent steps). That is, there is a time-dependent distribution
$D_{t}$ on $\t^{d}$ such that 
\begin{enumerate}
\item $X_{t_{2}}-X_{t_{1}}\sim D_{t_{2}-t_{1}}$ for every $t_{2}\geq t_{1}$;
\item For all $0\leq t_{1}\leq\ldots\leq t_{n}$, the increments $X_{t_{2}}-X_{t_{1}},\ldots,X_{t_{n}}-X_{t_{n-1}}$
are independent;
\item $D_{t+s}=D_{t}\star D_{s}$ for every $s,t\geq0$, where $\star$
is the convolution operator.
\end{enumerate}
We will also assume that $D_{t}$ is either continuous, or that it
is a mixture of an atom at $0$ and a continuous distribution. In
other words, writing $D_{t}$ as a function of $x$ for simplicity,
we have 
\[
D_{t}\left(x\right)=\beta_{t}\delta\left(x\right)+\left(1-\beta_{t}\right)\gamma_{t}\left(x\right),
\]
where $\delta\left(x\right)$ is the Dirac $\delta$-distribution,
$\gamma_{t}$ is a probability density function, and $\beta_{t}\in\left[0,1\right]$
is a time dependent factor. We will also assume that $\gamma_{t}$
is not too wild: $\gamma_{t}\in\mathrm{L}^{2}\left(\t^{d}\right)$
for all $t>0$. 
\begin{rem}
This class of distributions includes Brownian motion, and any Poisson
process whose steps have an $\mathrm{L}^{2}$ probability density
function. It also contains the sum of Brownian motion and any arbitrary
independent Poisson process, since the diffusion smooths out any irregularities
in the jumps. It does not, however, contain general jump processes
with atoms, even if the atoms are dense in $\t^{d}$ (e.g a Poisson
process on $\t$ which jumps by a step size $\alpha$ rationally independent
from $\pi$).
\end{rem}

The functions we reconstruct will be the indicators of open sets,
whose boundary has $0$ measure in $\r^{d}$. Let
\[
\mathcal{F}_{d}=\left\{ \one_{x\in\Omega}\left(x\right)\mid\Omega\subseteq\t^{d}\text{ is open, }\mathrm{Lebesgue}_{d}\left(\partial\Omega\right)=0\right\} .
\]
Our main result is as follows:
\begin{thm}[General reconstruction]
\label{thm:general_reconstruction}Let $X_{t}$ be a stochastic process
on $\t^{d}$ as above. If there exists a time $t_{0}$ such that the
Fourier coefficients $\left\{ \hat{\gamma}_{t_{0}}\left(k\right)\right\} _{k\in\z^{d}}$
are all distinct and nonzero, then $\mathcal{F}_{d}$ is reconstructible
by $X_{t}$. 
\end{thm}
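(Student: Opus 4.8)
The plan is to extract from the single observed path $f(X_t)_{t\ge0}$ a complete family of rotation-invariant statistics of $\Omega$ — its \emph{polyspectra} — and then to show both that these are computable from the trace and that they determine $\Omega$ up to a rotation. For the first point, fix $m\ge1$ and $s_1,\dots,s_{m-1}>0$, put $W_i=s_1+\cdots+s_i$, and form the time averages $\frac1T\int_0^T f(X_t)\,f(X_{t+W_1})\cdots f(X_{t+W_{m-1}})\,dt$. (This is legitimate: $\partial\Omega$ is Lebesgue-null and, away from the degenerate case of a constant process, $X_t$ has an absolutely continuous component for $t>0$, so almost surely $t\mapsto f(X_t)$ is well defined for a.e.\ $t$.) Since $\gamma_{t_0}$ is a genuine probability density, $|\hat\gamma_{t_0}(k)|<1$, whence $|\hat D_{t_0}(k)|<1$, for every $k\neq0$, and since $\hat\gamma_{t_0}(k)\to0$ as $|k|\to\infty$ (Riemann--Lebesgue) this is uniform: $\sup_{k\neq0}|\hat D_{t_0}(k)|<1$. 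Writing $\hat D_t(k)=e^{-t\psi(k)}$ (legitimate by the semigroup property, and in particular showing $\hat D_t(k)\neq0$), we get $\inf_{k\neq0}\mathrm{Re}\,\psi(k)>0$, i.e.\ a uniform spectral gap, so $X_t$ is exponentially mixing with the uniform measure as its unique stationary law; by standard arguments (using the spectral gap) the time averages converge almost surely to the stationary expectation
\[
C_m(s_1,\dots,s_{m-1})=\expect\!\bigl[f(U)\,f(U+V_1)\cdots f(U+V_1+\cdots+V_{m-1})\bigr],
\]
where $U$ is Haar-uniform on $\t^d$ and the $V_i\sim D_{s_i}$ are independent. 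Thus each $C_m(\vec s)$ is a deterministic functional of $\Omega$ that the reconstruction map may compute.

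Next, expanding each factor of $C_m$ in its Fourier series and integrating out $U$ forces the frequencies to sum to zero. Specialising to integer times $s_l=a_l t_0$ and writing $\nu_q$ for $\hat D_{t_0}(q)$ (or its complex conjugate — the distinction is immaterial here), a direct computation gives
\[
C_m(a_1t_0,\dots,a_{m-1}t_0)=\sum_{q_1,\dots,q_{m-1}\in\z^d}\ \Bigl(\prod_{i=0}^{m-1}\hat f(q_i-q_{i+1})\Bigr)\ \prod_{l=1}^{m-1}\nu_{q_l}^{\,a_l}\qquad(q_0:=q_m:=0).
\]
The coefficient of $\nu_{q_1}^{a_1}\cdots\nu_{q_{m-1}}^{a_{m-1}}$ is the \emph{closed polyspectrum} $P(p_0,\dots,p_{m-1}):=\prod_{i=0}^{m-1}\hat f(p_i)$ with $p_i:=q_i-q_{i+1}$ (so $\sum_i p_i=0$), and $\vec q\mapsto\vec p$ is a bijection. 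By hypothesis the multipliers $\hat D_{t_0}(k)$ — hence the nodes $\nu_k$ — are distinct, and (as noted) nonzero. Applying injectivity of the infinite Vandermonde operator iteratively in $a_1,\dots,a_{m-1}$, the full array $\bigl(C_m(a_1t_0,\dots,a_{m-1}t_0)\bigr)_{a_1,\dots,a_{m-1}\ge0}$ determines every $P(p_0,\dots,p_{m-1})$; ranging over all $m$ then recovers all closed polyspectra of $f$.

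It remains to show that the closed polyspectra determine $f$ up to a shift. (If $f$ is a.e.\ constant — i.e.\ $\Omega=\emptyset$ or $\Omega$ has full measure — the trace is constant and the claim is trivial, so assume otherwise.) Let $f_1,f_2\in\mathcal F_d$ have identical closed polyspectra. Taking $m=2$ gives $|\hat f_1(k)|=|\hat f_2(k)|$ for all $k$; hence $S:=\{k:\hat f_1(k)\neq0\}$ is symmetric and $h(k):=\hat f_2(k)/\hat f_1(k)$ is unimodular on $S$. Equality of all closed polyspectra says precisely that $\prod_i h(p_i)=1$ whenever $p_0,\dots,p_{m-1}\in S$ and $\sum_i p_i=0$ — equivalently, that $h$ extends to a group homomorphism $\widetilde h\colon\langle S\rangle\to\t$ (the extension $\widetilde h(\sum_j s_j):=\prod_j h(s_j)$ is well defined exactly because $h$ is trivial on zero-sum relations among elements of $S$). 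Since $\t$ is a divisible abelian group, $\widetilde h$ is the restriction of a character of $\z^d$, i.e.\ $\widetilde h(k)=e^{ik\cdot\theta}$ for some $\theta\in\t^d$. Then $\hat f_2(k)=e^{ik\cdot\theta}\hat f_1(k)$ for $k\in S$ and both sides vanish for $k\notin S$, so $f_2(x)=f_1(x+\theta)$ a.e. Consequently the map $A$ that extracts all closed polyspectra from the trace and returns a fixed $g\in\mathcal F_d$ realising them (one exists, namely $f$, and all such $g$ are shifts of one another) satisfies, with probability $1$, $A(f(X_t))(x)=f(x+\theta)$ for a.e.\ $x$, with $\theta\in\t^d$ a random shift.

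The main obstacle — the step flagged by the abstract — is the injectivity, on the relevant sequence space, of the infinite Vandermonde operator $(c_k)_{k\in\z^d}\mapsto\bigl(\sum_k c_k\nu_k^{\,n}\bigr)_{n\ge0}$ used above. The classical finite determinantal argument fails: there are infinitely many nodes, and since $\hat D_{t_0}(k)\to\beta_{t_0}\in[0,1)$ as $|k|\to\infty$ they accumulate, so one cannot simply ``read off'' poles. The natural route is to analyse the holomorphic function $z\mapsto\sum_k c_k/(1-\nu_k z)$ on a disc about the origin, extract residues at its isolated poles $1/\nu_k$ (those with $|\nu_k|>\beta_{t_0}$), and dispose of the cluster of nodes near $1/\beta_{t_0}$ separately; carrying this out in the form and generality needed here is precisely the paper's infinite Vandermonde lemma. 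The other, more routine, points requiring care are the almost-sure convergence of the time averages and the legitimacy of the termwise Fourier manipulations (the polyspectra lie in $\ell^2$ but need not lie in $\ell^1$).
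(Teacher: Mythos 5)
Your architecture matches the paper's for the first two stages: the quantities you call closed polyspectra are exactly the Fourier coefficients $\hat S_n(\boldsymbol{k})$ of the paper's spatial correlations, your time averages are its temporal correlations $T_n$, and the inversion is again an infinite Vandermonde problem. Your endgame, however, is genuinely different and is a nice alternative: instead of the paper's geometric grid approximation $\Omega_m\to\Omega$, you argue algebraically that equality of all closed polyspectra forces the ratio $\hat f_2/\hat f_1$ to be a homomorphism on the group generated by the spectrum, hence (by divisibility of $\t$) a character $e^{ik\cdot\theta}$, i.e.\ $f_2$ is a translate of $f_1$. That argument is correct (the needed symmetry of $S$ and the relation $h(-s)=h(s)^{-1}$ follow from $f$ being real-valued), and it replaces the only genuinely geometric step of the paper with soft algebra. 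Your iterated one-variable inversion also sidesteps the paper's page-long measure-theoretic argument for making the \emph{products} $\prod_i\hat\gamma_{t_i}(k_i)$ distinct, though you would still owe a verification that the intermediate coefficient sequences at each stage of the iteration lie in the dual sequence space required by the lemma.

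The genuine gap is in the Vandermonde step when the process has an atom, i.e.\ $\beta_{t_0}>0$ (Poisson-type processes, which the theorem explicitly covers). Your nodes are $\nu_k=\hat D_{t_0}(k)=\beta_{t_0}+(1-\beta_{t_0})\hat\gamma_{t_0}(k)$, which converge to $\beta_{t_0}\neq0$ and therefore lie in no $\ell^p$ with $p<\infty$. The paper's infinite Vandermonde lemma assumes $(z_n)\in\ell^p$ and $z_n\to0$, and its proof peels off finitely many nodes of maximal modulus at each stage precisely because the remaining tail is strictly smaller and summable; it says nothing about an infinite cluster of nodes accumulating at a nonzero point, so your remark that disposing of the cluster near $1/\beta_{t_0}$ ``is precisely the paper's infinite Vandermonde lemma'' is a misattribution, and that case is simply not covered by your argument. (A related, smaller issue: when $\beta_{t_0}>0$ the Parseval/termwise-Fourier step also needs care, since $D_{t_0}$ is not in $\mathrm{L}^2$; one must first split off the $\delta$ component.) The paper avoids both problems by expanding the product of the $D_{t_i}$ over subsets $A\subseteq[n]$ to isolate the atomic part, and by converting integer powers $\hat\gamma_t^m$ into linear combinations of $\hat\gamma_{jt}$ via the semigroup identity, so that the Vandermonde nodes are always products of $\hat\gamma$'s --- which do tend to zero and lie in $\ell^2$. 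Your proof is complete as written only in the purely continuous case $\beta_t\equiv0$ (which does include the Brownian-motion corollary); to get the stated generality you must either perform the paper's reduction from $\hat D$ to $\hat\gamma$, or prove a strictly stronger Vandermonde injectivity statement allowing accumulation at a nonzero point.
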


In one dimension, we show that symmetric distributions can reconstruct
$\mathcal{F}_{1}$ up to reflections:
\begin{thm}[Symmetric reconstruction]
\label{thm:symmetric_reconstruction}Let $X_{t}$ be a stochastic
process on $\t$ as above, and suppose that $\gamma_{t}$ is symmetric,
i.e $\gamma_{t}\left(y\right)=\gamma_{t}\left(-y\right)$ for all
$y$. If there exists a time $t_{0}$ such that the positive-indexed
Fourier coefficients $\left\{ \hat{\gamma}_{t_{0}}\left(k\right)\right\} _{k\geq0}$
are all distinct and nonzero, then $\mathcal{F}_{1}$ is reconstructible
up to reflections by $X_{t}$.
\end{thm}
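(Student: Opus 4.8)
The plan is to follow the proof of Theorem~\ref{thm:general_reconstruction}, carrying along the degeneracy $\hat\gamma_{t_0}(k)=\hat\gamma_{t_0}(-k)$ that symmetry of $\gamma_{t_0}$ forces. Write $f=\sum_{k\in\z}\hat f(k)e^{ikx}$ and put $\lambda_k:=\hat D_{t_0}(k)=\beta_{t_0}+(1-\beta_{t_0})\hat\gamma_{t_0}(k)$ (necessarily $\beta_{t_0}<1$, since an immobile process reconstructs nothing). Then $\lambda_0=1$, the $\{\lambda_k\}_{k\ge0}$ are distinct since the $\{\hat\gamma_{t_0}(k)\}_{k\ge0}$ are, and $|\lambda_k|<1$ for $k\ge1$ because an $\mathrm{L}^2$ density on $\t$ cannot concentrate on a finite coset. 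In particular the semigroup $D_t$ is mixing toward the uniform measure on $\t$, so the time-shift on path space is ergodic, and by the ergodic theorem, for every fixed tuple of lags $s_1,\dots,s_{n-1}\ge0$,
\[
\frac1T\int_0^T f(X_t)\,f(X_{t+s_1})\cdots f\!\left(X_{t+s_1+\cdots+s_{n-1}}\right)dt\ \xrightarrow[T\to\infty]{}\ \e\!\left[f(X_0)f(X_{s_1})\cdots f\!\left(X_{s_1+\cdots+s_{n-1}}\right)\right]
\]
almost surely, the right-hand side being under the stationary (uniform) start. The hypotheses that $\mathrm{Lebesgue}(\partial\Omega)=0$ and that $D_t$ has no singular part besides its atom at $0$ guarantee that $t\mapsto f(X_t)$ is measurable and that the limit above does not change if $f$ is replaced by another representative of its $\mathrm{L}^2$-class; hence all of these $n$-point correlations are legitimate inputs to the reconstruction map $A$.

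Next I would expand each correlation in Fourier series, using independence of increments together with $\hat D_{s+s'}=\hat D_s\hat D_{s'}$, and specialize to $s_l=j_l t_0$ with $j_l\in\n$, obtaining
\[
M_n(j_1,\dots,j_{n-1})\ =\ \sum_{m_1+\cdots+m_n=0}\ \Bigl(\prod_{i=1}^n\hat f(m_i)\Bigr)\ \prod_{l=1}^{n-1}\lambda_{\,m_{l+1}+\cdots+m_n}^{\,j_l},
\]
a generalized Dirichlet series in $(j_1,\dots,j_{n-1})$ whose frequency tuples are of the form $(\lambda_{a_1},\dots,\lambda_{a_{n-1}})$. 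Because $\lambda_k=\lambda_{-k}$ while the $\{\lambda_k\}_{k\ge0}$ are distinct, collecting terms by the value of the frequency tuple sums together exactly the index configurations that differ by independent sign flips; applying the infinite-Vandermonde injectivity proved earlier in the paper, coordinate by coordinate, then recovers from $\{M_n(j_1,\dots,j_{n-1})\}_{j\in\n^{\,n-1}}$ precisely the corresponding \emph{sign-symmetrized} sums of products of Fourier coefficients of $f$. For $n=2$ this yields $\hat f(0)=\mathrm{Lebesgue}(\Omega)/2\pi$ and $|\hat f(k)|^2$ for all $k$ (the power spectrum); for $n=3$ it yields the symmetrized bispectrum $\sum_{\eps_1,\eps_2\in\{\pm1\}}\hat f(\eps_1 a)\hat f(\eps_2 c)\hat f(-\eps_1 a-\eps_2 c)$ for $a,c\ge0$; and similarly at every order.

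It then remains to show that this entire body of sign-symmetrized data determines $f$ up to the group generated by translations $x\mapsto x+\theta$ and the reflection $x\mapsto -x$, and no finer. The ``no finer'' half is immediate and pinpoints the reflection ambiguity: $\widehat{f(-\,\cdot\,)}(k)=\hat f(-k)=\overline{\hat f(k)}$, so reflection acts on the Fourier data by global complex conjugation, which fixes every sign-symmetrized quantity — equivalently, $(X_t)$ and $(-X_t)$ have the same law, so no procedure can separate $f$ from $f(-\,\cdot\,)$. For the reconstruction itself: if $f$ is a.e.\ constant the claim is trivial; otherwise, as in the argument underlying Theorem~\ref{thm:general_reconstruction}, one reads off the (reflection-symmetric) zero set $\{k:\hat f(k)=0\}$ from the recovered magnitudes, uses the translation freedom to fix the phase of $\hat f(k_0)$ for the least $k_0\ge1$ with $\hat f(k_0)\ne0$, and propagates the remaining phases through the symmetrized bispectral data (and, where zeros of $\hat f$ create extra slack, the higher-order symmetrized spectra), the $S_3$-symmetry of the bispectrum and the overdetermined nature of these relations pinning down all phases on the support of $\hat f$ up to a single overall conjugation; reassembling $f=\sum_k\hat f(k)e^{ikx}$ and invoking the null-boundary hypothesis to recover $\Omega$ up to a null set defines $A$. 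I expect this last step to be the crux: extracting the individual Fourier phases from only the \emph{sign-symmetrized} higher-order correlations (rather than the full, unsymmetrized ones available in the setting of Theorem~\ref{thm:general_reconstruction}) is delicate, and one must verify that the leftover ambiguity is exactly one reflection and not a larger group — which is precisely where the hypothesis that the $\hat\gamma_{t_0}(k)$ are \emph{distinct}, not merely nonzero, is needed, via the infinite-Vandermonde injectivity. A more routine but non-trivial secondary point is to justify the ergodicity and measurability claims of the first paragraph uniformly across the whole admissible class of processes $X_t$, not just Brownian motion.
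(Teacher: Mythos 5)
Your overall architecture (compute stationary $n$-point temporal correlations from the trace, invert a Vandermonde-type system to get sign-symmetrized spectral data, then reconstruct $f$ up to translation and reflection) parallels the paper's, but the step you yourself flag as ``the crux'' is a genuine gap, and it is precisely the step the paper handles by a completely different, real-space device. From the symmetrized data you only obtain, in the paper's notation, the fully symmetrized correlations $\sigma_{n}\left(\boldsymbol{y}\right)=\sum_{\boldsymbol{\eps}\in\left\{ -1,1\right\} ^{n}}S_{n}\left(\eps_{1}y_{1},\ldots,\eps_{n}y_{n}\right)$ (equivalently, your sign-symmetrized power spectrum, bispectrum, etc.). Your proposal to ``propagate the phases through the symmetrized bispectral data'' and to patch up the cases where $\hat{f}$ vanishes using higher-order spectra is not a proof: bispectral phase retrieval is known to break down when the Fourier support has gaps, the symmetrized bispectrum carries strictly less information than the full one, and you give no argument that the residual ambiguity is exactly one global conjugation rather than a larger group. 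The paper avoids phase retrieval entirely. It proves, by an induction on $\sum_{i}k_{i}$, that knowledge of $\sigma_{m}$ for \emph{all} orders $m\leq n$ determines the single pair $S_{n}\left(\boldsymbol{y}\right)+S_{n}\left(-\boldsymbol{y}\right)$ at grid points --- the key observation being that any mixed-sign term $S_{n}\left(\eps_{1}k_{1}\delta,\ldots,\eps_{n}k_{n}\delta\right)$ with not all $\eps_{i}$ equal coincides with a correlation of strictly smaller total length, hence is already known. Positivity of $S_{n}\left(\boldsymbol{y}\right)+S_{n}\left(-\boldsymbol{y}\right)$ then feeds the same grid-approximation argument as in the general theorem, with the reflection ambiguity arising only because one does not know which of the two summands is positive. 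Some substitute for this inductive reduction is indispensable in your outline.

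A secondary but real problem: you take the Vandermonde generators to be $\lambda_{k}=\hat{D}_{t_{0}}\left(k\right)=\beta_{t_{0}}+\left(1-\beta_{t_{0}}\right)\hat{\gamma}_{t_{0}}\left(k\right)$. When the process has an atom at $0$ (e.g.\ a compound Poisson process), $\beta_{t_{0}}>0$ and $\lambda_{k}\to\beta_{t_{0}}\neq0$, so $\left(\lambda_{k}\right)_{k}$ is not in $\ell^{p}$ for any finite $p$ and does not tend to $0$; Lemma \ref{lem:infinite_vandemonde} does not apply, and the Hölder pairing against $\hat{f}\in\ell^{2}$ is not even absolutely convergent in the required form. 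The paper circumvents this by expanding $D_{t}=\beta_{t}\delta+\left(1-\beta_{t}\right)\gamma_{t}$ over subsets $A\subseteq\left[n\right]$ of ``moving'' steps, so that only the genuinely $\ell^{2}$ coefficients $\hat{\gamma}_{t}$ ever enter the Vandermonde system, and by reducing powers $\hat{\gamma}_{t}^{m}$ to linear combinations of $\hat{\gamma}_{jt}$ via the semigroup identity. You would need to incorporate both of these reductions for your first two paragraphs to be sound beyond the atomless case.
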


One corollary of Theorem \ref{thm:general_reconstruction}, is that
with the right drift, Brownian motion can be used to reconstruct $\mathcal{F}_{d}$. 
\begin{cor}[Brownian motion can feel the shape of a drum]
\label{cor:brownian_motion_can_feel}Let $X_{t}$ be Brownian motion
with drift $v\in\r^{d}$, such that $\left\{ v_{1},\ldots,v_{d}\right\} $
are rationally independent. Then $\mathcal{F}_{d}$ is reconstructible
by $X_{t}$. 
\end{cor}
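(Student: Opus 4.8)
The plan is to apply Theorem \ref{thm:general_reconstruction} directly, so that all the work consists of checking its hypotheses for $X_t$ equal to (standard) Brownian motion with drift $v$. First I would record that such an $X_t$ is an infinitely-divisible process with independent increments of the required type: the increment $X_{t_2}-X_{t_1}$ is the image on $\t^d$ of an $\r^d$-valued Gaussian with mean $(t_2-t_1)v$ and covariance $(t_2-t_1)I$; these increments are independent and stationary; the relation $D_{t+s}=D_t\star D_s$ holds because the wrapping map $\r^d\to\t^d$ intertwines convolution, and Gaussians convolve to Gaussians with added means and covariances; and the wrapped Gaussian density $\gamma_t$ is smooth and bounded, hence lies in $\mathrm{L}^2\left(\t^d\right)$, with no atom ($\beta_t=0$). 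So the only remaining point is the Fourier condition for a suitable $t_0$.

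Next I would compute $\hat{\gamma}_t(k)$ for $k\in\z^d$. Since the $k$-th Fourier coefficient of a wrapped density equals the characteristic function of the unwrapped density evaluated at the integer frequency $k$ (equivalently, by Poisson summation), one gets, up to a fixed normalizing constant independent of $k$ that affects neither distinctness nor vanishing,
\[
\hat{\gamma}_t(k)=\exp\!\left(i\,t\,(v\cdot k)-\tfrac{t}{2}\abs k^2\right),\qquad k\in\z^d.
\]
In particular $\hat{\gamma}_t(k)\neq0$ for every $k$ and every $t>0$, which settles nonvanishing. For distinctness, suppose $\hat{\gamma}_{t_0}(k)=\hat{\gamma}_{t_0}(k')$ with $k\neq k'$; comparing absolute values forces $\abs k=\abs{k'}$, and then dividing gives $\exp\!\left(i\,t_0\,(v\cdot(k-k'))\right)=1$, i.e. $t_0\,(v\cdot m)\in2\pi\z$ with $m:=k-k'\in\z^d\setminus\{0\}$.

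It therefore remains to choose $t_0$ so that $t_0\,(v\cdot m)\notin2\pi\z$ for every nonzero $m\in\z^d$. This is where the hypothesis is used: rational independence of $\{v_1,\dots,v_d\}$ says precisely that $v\cdot m=\sum_i v_i m_i\neq0$ for every nonzero integer (indeed rational) vector $m$. Hence for each fixed such $m$ the set of ``bad'' times $\{t>0:\,t\,(v\cdot m)\in2\pi\z\}=\tfrac{2\pi}{v\cdot m}\z\cap(0,\infty)$ is countable, and the union over the countably many $m\in\z^d\setminus\{0\}$ is still countable; its complement in $(0,\infty)$ is then nonempty (in fact co-countable), and any $t_0$ in that complement satisfies the Fourier condition of Theorem \ref{thm:general_reconstruction}. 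I do not expect a real obstacle here: the argument is short, and the only care needed is bookkeeping of the normalization/covariance factors in the Fourier computation. It is worth remarking that rational independence is not merely convenient but genuinely forced by this criterion --- for instance with $d=2$ and $v=(1,1)$ one has $\hat{\gamma}_t((1,0))=\hat{\gamma}_t((0,1))$ for all $t$.
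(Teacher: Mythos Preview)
Your proposal is correct and follows essentially the same route as the paper: compute the Fourier coefficients of the wrapped Gaussian with drift, observe they never vanish, and then choose $t_0$ so that no nontrivial integer combination $t_0(v\cdot m)$ lands in $2\pi\z$. The only cosmetic difference is that the paper phrases the last step as choosing $t_0$ with $\{v_1,\dots,v_d,2\pi/t_0\}$ rationally independent, whereas you reach the same conclusion by the (equivalent) countability argument; your preliminary reduction via $|k|=|k'|$ is a harmless refinement.
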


\begin{rem}
The condition on the drift $v$ is natural: If the components of $v$
are rationally independent, then the geodesic flow defined by $v$
is dense in $\t^{d}$. Reconstructing a set $\Omega$ from this geodesic
is immediate. In this sense, Corollary \ref{cor:brownian_motion_can_feel}
states that reconstruction is possible also in the presence of noise
which pushes us out of the trajectory. See Section \ref{sec:open_questions}
for a question on a related model.
\end{rem}

The starting point for our results is a relation, introduced in \cite{matzinger_lember_reconstruction_of_periodic_sceneries_seen_along_a_random_walk},
between two types of $n$-point correlations related to $f$ - one
known, and one unknown. After inverting the relation, the latter correlation
can be used to reconstruct the function $f$. In the discrete case,
the relation is readily inverted using a finite Vandermonde matrix.
In the continuous setting, additional difficulties arise due to both
the more complicated nature of the distribution $D_{t}$, which mixes
together different correlations, and the fact that the function space
on $\t^{d}$ is infinite-dimensional. To address the latter issue,
we prove an injectivity result for infinite Vandermonde matrices,
which may be of independent interest.
\begin{lem}[Infinite Vandermonde]
\label{lem:infinite_vandemonde}Let $p,q\in\left[1,\infty\right]$
be such that $\frac{1}{p}+\frac{1}{q}=1$. Let $\left(z_{n}\right)_{n}\in\ell^{p}\left(\c\right)$
be a sequence of distinct complex numbers such that $z_{n}\to0$,
and $z_{n}\neq0$ for all $n$. Let $V$ be the infinite Vandermonde
matrix with $z_{n}$ as generators, i.e 
\[
V_{ij}=z_{j}^{i}.
\]
If $\boldsymbol{x}\in\ell^{q}\left(\c\right)$ is a zero of the infinite
system of equations
\begin{equation}
V\boldsymbol{x}=0,\label{eq:infinite_matrix_equation}
\end{equation}
then $\boldsymbol{x}=0$.
\end{lem}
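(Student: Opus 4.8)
The plan is to encode the whole system into a single generating function and recover $\boldsymbol{x}$ from its residues. Since $z_n\to 0$, the number $M:=\sup_n\abs{z_n}$ is finite; for each $i\ge 1$ the sequence $(z_n^i)_n$ lies in $\ell^p$ (using $\abs{z_n}^i\le M^{i-1}\abs{z_n}$), so by Hölder every equation $(V\boldsymbol{x})_i=\sum_n z_n^i x_n=0$ is an absolutely convergent identity. For $w\in\c$ set
\[
G(w)=\sum_{n} x_n\,\frac{z_n w}{1-z_n w}.
\]
The first step is to show that $G$ is meromorphic on all of $\c$, with poles contained in $\left\{1/z_n\right\}_n$: given $R>0$, only finitely many $n$ satisfy $\abs{z_n}\ge 1/(2R)$, and for every other $n$ one has $\bigl|\tfrac{z_n w}{1-z_n w}\bigr|\le 2\abs{z_n}\abs{w}$ on $\abs{w}\le R$, so the corresponding tail converges locally uniformly there because $\sum_n\abs{x_n z_n}\le\norm{\boldsymbol{x}}_q\norm{(z_n)}_p<\infty$; the finitely many remaining terms form a rational function. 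In particular $G$ is holomorphic near $0$.

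The second step is to identify the Taylor expansion of $G$ at $0$. On a disc $\abs{w}\le r<1/M$ the double sum $\sum_n\abs{x_n}\sum_{i\ge 1}\abs{z_n w}^i$ is finite, so Fubini's theorem gives
\[
G(w)=\sum_{i\ge 1}\Bigl(\sum_n z_n^{\,i}x_n\Bigr)w^i=\sum_{i\ge 1}(V\boldsymbol{x})_i\,w^i=0 .
\]
Hence $G$ vanishes on a neighbourhood of $0$; being a meromorphic function on the connected set $\c$, it is identically $0$ by the identity theorem, and therefore has no poles at all.

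The third step reads off $\boldsymbol{x}$. Fix $n$. Distinctness of the $z_k$ gives $1/z_k\ne 1/z_n$ for $k\ne n$, and $z_k\to 0$ forces the points $1/z_k$ to accumulate only at $\infty$; hence on a small disc around $1/z_n$ every term of the series other than the $n$-th is holomorphic and, by the same tail estimate as above, these sum to a holomorphic function. Since $G\equiv 0$, the $n$-th term $x_n\,\tfrac{z_n w}{1-z_n w}$ must itself be holomorphic at $1/z_n$; but it has a simple pole there with residue $-x_n/z_n$ unless $x_n=0$. Therefore $x_n=0$ for every $n$, i.e. $\boldsymbol{x}=0$.

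I expect the main obstacle to be the analytic bookkeeping of the first two steps rather than the idea: one must use $z_n\to 0$ carefully to peel off the finitely many ``large'' generators so that $G$ genuinely defines a meromorphic function on $\c$ (and, after deleting one term, a holomorphic one near each pole), and one must justify the interchange of summations that identifies the Taylor coefficients with the rows of $V\boldsymbol{x}$. The distinctness hypothesis is used exactly once, but decisively: it guarantees that the pole of $G$ at $1/z_n$ is produced by the single unknown $x_n$, so that ``$G$ has no poles'' translates coordinate-by-coordinate into $\boldsymbol{x}=0$.
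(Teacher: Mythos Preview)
Your argument is correct, and it is genuinely different from the paper's proof. The paper proceeds by ordering the $z_n$ by decreasing modulus and, at each stage, peeling off the finite block of generators that share the current largest modulus. Dividing through by an appropriate $z_{k+\ell}^i$ turns that block into a finite Vandermonde system with unit-circle generators $\omega_s$, perturbed by a tail that vanishes as the exponent $i\to\infty$. The delicate point is that the inverse of this finite Vandermonde matrix depends on $i$, and the paper controls it via a separate pigeonhole-type lemma (``recurrent rotations'') guaranteeing infinitely many exponents for which the $\omega_s^m$ stay well separated. Your generating function $G(w)=\sum_n x_n\,\frac{z_n w}{1-z_n w}$ bypasses all of this: meromorphicity plus the identity theorem replace the induction, and the residue at $1/z_n$ isolates each coordinate $x_n$ directly, using distinctness exactly once. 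The trade-off is that the paper's argument is more explicitly quantitative (one can read off bounds on how many equations are needed to nearly determine finitely many coordinates), whereas yours is considerably shorter, avoids the auxiliary lemma entirely, and makes the role of the hypotheses transparent: $z_n\to 0$ gives meromorphicity on all of $\c$, $(z_n)\in\ell^p$ and $\boldsymbol{x}\in\ell^q$ give the summability needed for Fubini and the Weierstrass $M$-test, and distinctness ensures each pole is simple and carries a single $x_n$.
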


\begin{rem}
\label{rem:vandermonde_multiplication_is_ok}The matrix equation $V\boldsymbol{x}=0$
means that for every index $i\in\n$ we have 
\[
0=\sum_{j=1}^{\infty}V_{ij}x_{j}=\sum_{j=1}^{\infty}z_{j}^{i}x_{j}.
\]
By Hölder's inequality, since $z\in\ell^{p}\left(\c\right)$ and $\boldsymbol{x}\in\ell^{q}\left(\c\right)$,
the series $\left(z_{j}^{i}x_{j}\right)_{j}$ is absolutely convergent,
and the left hand side of (\ref{eq:infinite_matrix_equation}) is
well defined. 
\end{rem}

Basic preliminaries and the proof of Theorem \ref{thm:general_reconstruction}
are given in the next section. Section \ref{sec:symmetric_reconstruction}
gives the outline of Theorem \ref{sec:general_reconstruction}, relying
on the same techniques described in Section \ref{sec:general_reconstruction}.
Brownian motion is discussed in Section \ref{sec:example_brownian_motion},
and Lemma \ref{lem:infinite_vandemonde} is proved in Section \ref{sec:vandermonde_lemma}.
We conclude the paper with some open questions.

\subsection{Acknowledgments}

The author thanks Itai Benjamini, Ronen Eldan, Shay Sadovsky, Ori
Sberlo and Ofer Zeitouni for their stimulating discussions and suggestions.

\section{General reconstruction\label{sec:general_reconstruction}}

\subsection{Notation and simple properties of $D_{t}$}

We write $d$-dimensional vectors in standard italics, e.g $k\in\z^{d}$.
Tuples of $n$ vectors are written in boldface, e.g $\boldsymbol{k}\in\z^{nd}$,
with $\boldsymbol{k}=\left(k_{1},\ldots,k_{n}\right)$ and $k_{i}\in\z^{d}$. 

The Fourier series of a function $g:\t^{d}\to\c$ is a function $\hat{g}:\z^{d}\to\c$,
given by 
\[
\hat{g}\left(k\right)=\int_{\t^{d}}g\left(x\right)e^{-ik\cdot x}dx.
\]
Note that we do not divide by the customary $1/\left(2\pi\right)^{d}$.
This simplifies the statement of the convolution theorem: In this
setting, we have 
\[
\widehat{f\star g}=\hat{f}\cdot\hat{g},
\]
without any leading factor in the right hand side. 

This definition also extends to the Dirac $\delta$-distribution,
even though it is not a function, so that for all $k\in\z^{d}$,
\[
\hat{\delta}\left(k\right)=\int_{-\pi}^{\pi}\delta\left(x\right)e^{-ik\cdot x}dx=1.
\]
Recall that $D_{t}=\beta_{t}\delta+\left(1-\beta_{t}\right)\gamma_{t}$.
Using the fact that $D_{s+t}=D_{s}\star D_{t}$, a short calculation
shows that if the parameter $\beta_{t}$ is not identically $0$,
it must decay exponentially: $\beta_{t}=e^{-ct}$ for some constant
$c$. We implicitly assume that $\beta_{t}$ is not identically $1$,
as in this case $X_{t}$ does not move and is uninteresting. We thus
have that 
\begin{equation}
\beta_{\alpha t}=\beta_{t}^{\alpha}\,\,\,\,\,\,\,\all\alpha,t>0.\label{eq:beta_is_multiplicative}
\end{equation}
Since $D_{t}$ has a probability density function $\gamma_{t}$, we
have that $D_{t}\to U\left(\t^{d}\right)$ in distribution as $t\to\infty$,
i.e $X_{t}$ converges to the uniform distribution no matter its starting
point. 

The distribution $D_{t}$ has a Fourier representation $\hat{D}_{t}$,
given by 
\begin{equation}
\hat{D}_{t}\left(k\right)=\beta_{t}\hat{\delta}\left(k\right)+\left(1-\beta_{t}\right)\hat{\gamma}_{t}\left(k\right)=\beta_{t}+\left(1-\beta_{t}\right)\hat{\gamma}_{t}\left(k\right)\,\,\,\,\,k\in\z^{d}.\label{eq:fourier_coeff_of_d_t}
\end{equation}
By the convolution theorem, $\hat{D}_{t+s}=\widehat{D_{t}\star D_{s}}=\hat{D}_{t}\hat{D}_{s}$.
From this it follows that for any $\alpha,t>0$, we have 

\[
\hat{D}_{\alpha t}=\left(\hat{D}_{t}\right)^{\alpha}.
\]
Plugging this into (\ref{eq:fourier_coeff_of_d_t}) gives 
\[
\beta_{\alpha t}+\left(1-\beta_{\alpha t}\right)\hat{\gamma}_{\alpha t}\left(k\right)=\left(\beta_{t}+\left(1-\beta_{t}\right)\hat{\gamma}_{t}\left(k\right)\right)^{\alpha},
\]
and so by (\ref{eq:beta_is_multiplicative}),
\begin{equation}
\hat{\gamma}_{\alpha t}=\frac{\left(\beta_{t}+\left(1-\beta_{t}\right)\hat{\gamma}_{t}\right)^{\alpha}-\beta_{t}^{\alpha}}{1-\beta_{t}^{\alpha}}.\label{eq:fourier_coeff_of_gamma_alpha_t}
\end{equation}

\subsection{Proof of Theorem \ref{thm:general_reconstruction}}

The proofs of Theorems \ref{thm:general_reconstruction} and \ref{thm:symmetric_reconstruction}
use the relation between the spatial correlation and temporal correlation
introduced in \cite{matzinger_lember_reconstruction_of_periodic_sceneries_seen_along_a_random_walk}.
Let $f$ be the indicator of an open set $\Omega$. For every integer
$n\geq0$, define the $n$-th spatial correlation of $f$, denoted
$S_{n}:\t^{nd}\to\r$, as 
\begin{align*}
S_{n}\left(\boldsymbol{y}\right) & =\frac{1}{\left(2\pi\right)^{d}}\int_{\t^{d}}f\left(x\right)f\left(x+y_{1}\right)\cdots f\left(x+\sum_{i=1}^{n}y_{i}\right)dx\\
 & =\frac{1}{\left(2\pi\right)^{d}}\int_{\mathbb{T}^{d}}f\left(x\right)\prod_{k=1}^{n}\left[f\left(x+\sum_{i=1}^{k}y_{i}\right)\right]dx,
\end{align*}
 and the $n$-th temporal correlation of $f\left(X_{t}\right)$, denoted
$T_{n}:\r_{+}^{n}\to\r$, as 
\[
T_{n}\left(\boldsymbol{t}\right)=\e_{X_{0}\sim U\left(\mathbb{T}^{d}\right)}\left[f\left(X_{0}\right)f\left(X_{t_{1}}\right)\cdots f\left(X_{\sum_{i=1}^{n}t_{i}}\right)\right]
\]
(note that $S_{0}$ and $T_{0}$ are just constants, equal to the
measure of $\Omega$ relative to $\t^{d}$). The proof involves two
parts: The first shows that $S_{n}\left(\boldsymbol{y}\right)$ can
be calculated from our knowledge of $T_{n}\left(\boldsymbol{t}\right)$.
The second uses $S_{n}\left(\boldsymbol{y}\right)$ to reconstruct
$f$ with better and better precision as $n\to\infty$.
\begin{prop}
\label{prop:reconstructing_s_n}Let $n\in\n$. Under the conditions
of Theorem \ref{thm:general_reconstruction}, the function $S_{n}$
can be calculated from $f\left(X_{t}\right)$ with probability $1$.
\end{prop}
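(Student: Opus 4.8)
The plan is to split the proof into two stages, with an induction on $n$ (base case $n=0$: $S_0=T_0$ is the relative measure of $\Omega$, which the law of large numbers below recovers). First, recover the temporal correlations from the trace: for every fixed $\boldsymbol{t}$,
\[
T_n(\boldsymbol{t}) = \lim_{S\to\infty}\frac1S\int_0^S f(X_s)f(X_{s+t_1})\cdots f\!\left(X_{s+\sum_{i=1}^n t_i}\right)ds \qquad\text{a.s.}
\]
This holds because $D_t\to U(\t^d)$, so the Markov process $X_s$ is ergodic (indeed mixing) for its unique stationary measure $U(\t^d)$ regardless of where it starts, whence the time average of the bounded functional $(x_0,\dots,x_n)\mapsto f(x_0)\cdots f(x_n)$ converges a.s.\ to its expectation under the stationary joint law of $(X_s,X_{s+t_1},\dots)$ — which is $T_n(\boldsymbol{t})$. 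The hypothesis that $\partial\Omega$ is Lebesgue-null enters here, making $f=\one_\Omega$ a.e.\ continuous so that the discontinuity set of the functional is negligible for the (absolutely continuous) limiting law; upgrading $L^2$ convergence of the averages to a.s.\ convergence is a routine variance-plus-Borel--Cantelli argument.

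Second, invert the relation $T_n\leftrightarrow S_n$. Expanding $D_{t_i}=\beta_{t_i}\delta+(1-\beta_{t_i})\gamma_{t_i}$ in $T_n(\boldsymbol{t})=\int S_n(\boldsymbol{\xi})\prod_i D_{t_i}(\xi_i)\,d\boldsymbol{\xi}$ (legitimate since $S_n$ is continuous, being an $(n{+}1)$-fold correlation integral of the bounded $f$) and using the elementary fact that setting an increment of $S_m$ to $0$ collapses it to $S_{m-1}$, one obtains
\[
T_n(\boldsymbol{t}) = \Bigl(\prod_{i=1}^n(1-\beta_{t_i})\Bigr)G(\boldsymbol{t}) + \sum_{\emptyset\neq A\subseteq\{1,\dots,n\}}\Bigl(\prod_{i\in A}\beta_{t_i}\prod_{i\notin A}(1-\beta_{t_i})\Bigr)\!\int S_{n-|A|}(\boldsymbol{\xi})\prod_{i\notin A}\gamma_{t_i}(\xi_i)\,d\boldsymbol{\xi},
\]
where $G(\boldsymbol{t})=\int S_n(\boldsymbol{\xi})\prod_i\gamma_{t_i}(\xi_i)\,d\boldsymbol{\xi}$. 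By the inductive hypothesis the $S_m$ with $m<n$ are known, and $\beta_t,\gamma_t$ are fixed by the (known) law of $X_t$; hence every $A\neq\emptyset$ term is known and, since $\beta_{t_i}<1$, I can solve for $G(\boldsymbol{t})$ on all of $\r_{+}^{n}$. Because $\gamma_{t_i}\in L^2(\t^d)$, Parseval gives the absolutely convergent formula $G(\boldsymbol{t})=(2\pi)^{-nd}\sum_{\boldsymbol{\ell}\in\z^{nd}}\hat S_n(\boldsymbol{\ell})\prod_i\hat\gamma_{t_i}(-\ell_i)$, with $\hat S_n\in\ell^2(\z^{nd})$ (as $S_n\in L^\infty\subseteq L^2(\t^{nd})$).

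It remains to deduce $\hat S_n$ from the knowledge of $G$ on $\r_{+}^{n}$. For this I peel off one coordinate at a time and reduce to the following one-coordinate claim: if $e\in\ell^2(\z^d)$ satisfies $\sum_{\ell}\hat\gamma_t(-\ell)e(\ell)=0$ for all $t>0$, then $e\equiv0$. Applied to the sections $e(\ell_1)=(2\pi)^{-nd}\sum_{\ell_2,\dots,\ell_n}\hat S_n(\boldsymbol{\ell})\prod_{i\ge2}\hat\gamma_{t_i}(-\ell_i)\in\ell^2(\z^d)$ (Cauchy--Schwarz, using $\hat S_n\in\ell^2$ and $\gamma_{t_i}\in L^2$), and then recursively in the remaining coordinates, this shows $\hat S_n\mapsto G$ has trivial kernel, so the known $G$ pins $\hat S_n$ down, hence $S_n$ by Fourier inversion. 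To prove the claim, specialize $t=\alpha t_0$, $\alpha\in\n$, and use (\ref{eq:fourier_coeff_of_gamma_alpha_t}) to write $\hat\gamma_{\alpha t_0}(-\ell)=\frac{1}{1-\beta_{t_0}^{\alpha}}\sum_{j=1}^{\alpha}\binom{\alpha}{j}\beta_{t_0}^{\alpha-j}(1-\beta_{t_0})^{j}r_\ell^{\,j}$ with $r_\ell:=\hat\gamma_{t_0}(-\ell)$; clearing the nonzero denominator, the equations become $\sum_{j=1}^{\alpha}\binom{\alpha}{j}\beta_{t_0}^{\alpha-j}(1-\beta_{t_0})^{j}\mu_j=0$ for all $\alpha\ge1$, where $\mu_j:=\sum_\ell r_\ell^{\,j}e(\ell)$ converges absolutely for $j\ge1$ ($r\in\ell^2$, $|r_\ell|\le1$). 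This triangular system has unit diagonal coefficient $(1-\beta_{t_0})^{\alpha}\neq0$, so $\mu_j=0$ for all $j\ge1$; and since $r$ is distinct, nonzero (hypothesis of Theorem \ref{thm:general_reconstruction}) and in $\ell^2$ (Parseval), while $e\in\ell^2$, Lemma \ref{lem:infinite_vandemonde} with $p=q=2$ forces $e\equiv0$.

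The hard part is this inversion, and in particular the atom $\beta_t\delta$ in $D_t$, which causes two difficulties. First, it makes $T_n$ a mixture of $S_n$ with the lower correlations $S_0,\dots,S_{n-1}$ (the "mixing of correlations" mentioned in the introduction), which I absorb via the induction on $n$ together with the collapse identity. Second, and more essentially, the obvious generators $\hat D_{t_0}(\ell)=\beta_{t_0}+(1-\beta_{t_0})\hat\gamma_{t_0}(\ell)$ converge to $\beta_{t_0}$, not to $0$, so Lemma \ref{lem:infinite_vandemonde} cannot be applied to them directly; the remedy is to pass to the genuinely decaying $\hat\gamma_{t_0}(\ell)=(\hat D_{t_0}(\ell)-\beta_{t_0})/(1-\beta_{t_0})$, at the price of the extra triangular layer above. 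By comparison, making the ergodic averaging of the first stage precise, and the $\ell^2$-bookkeeping throughout (which is exactly where the standing assumption $\gamma_t\in L^2$ is needed), are routine.
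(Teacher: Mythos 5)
Your proof is correct, and the key inversion step takes a genuinely different route from the paper's. The paper (in Proposition \ref{prop:reconstructing_monomials}) first computes $\sum_{\boldsymbol{k}}\bigl(\prod_i\hat{\gamma}_{t_i}(k_i)\bigr)^m\hat{S}_n(\boldsymbol{k})$ for \emph{all} powers $m$, then fixes a single tuple of times $t_i=\alpha_i t_0$ and applies Lemma \ref{lem:infinite_vandemonde} \emph{once}, in dimension $nd$, to the generators $z_{\boldsymbol{k}}=\prod_i\hat{\gamma}_{t_i}(k_i)$. The price is that these $nd$-dimensional products must themselves be distinct and nonzero, which the paper secures by a separate genericity argument (zeros of non-constant holomorphic functions in the $\alpha_i$, countable unions of measure-zero sets). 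You instead use only the first power but vary the times, peel off one coordinate at a time, and apply the Vandermonde lemma $n$ times in dimension $d$, where distinctness of the generators $\hat{\gamma}_{t_0}(k)$ is verbatim the hypothesis of Theorem \ref{thm:general_reconstruction}; your triangular system in the moments $\mu_j$ (with nonvanishing diagonal $(1-\beta_{t_0})^{\alpha}$) plays exactly the role of the paper's rearrangement $\hat{\gamma}_t^m=\sum_j c_j'\hat{\gamma}_{jt}$, i.e.\ both invert the same relation between $\hat{\gamma}_{t_0}^j$ and $\hat{\gamma}_{\alpha t_0}$. What your route buys is the elimination of the entire measure-zero argument in the paper's proof of Proposition \ref{prop:reconstructing_s_n}; what it costs is $n$ nested applications of the lemma (with the routine but necessary Cauchy--Schwarz check that each section stays in $\ell^2$, which you do supply) rather than one. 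Your $p=q=2$ is also the correct reading of the lemma's hypothesis (the paper's ``$p=q=1/2$'' is a typo). The ergodic-averaging stage and the expansion of $D_t=\beta_t\delta+(1-\beta_t)\gamma_t$ with the collapse identity for $S_n$ match the paper's treatment.
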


We intend to prove Proposition \ref{prop:reconstructing_s_n} using
the Vandermonde lemma (Lemma \ref{lem:infinite_vandemonde}). To this
end, we first show the following:
\begin{prop}
\label{prop:reconstructing_monomials}For every positive integers
$n,m\in\n$, and every set of times $t_{1},\ldots,t_{n}>0$, the value
of the sum 
\[
\sum_{\boldsymbol{k}\in\z^{nd}}\left(\prod_{i=1}^{n}\hat{\gamma}_{t_{i}}\left(k_{i}\right)\right)^{m}\hat{S}\left(\boldsymbol{k}\right)
\]
 can be calculated from $f\left(X_{t}\right)$ with probability $1$.
\end{prop}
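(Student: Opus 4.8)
The plan is to reconstruct the sum in three stages: first read the temporal correlations $T_{k}$ off a single trajectory; then remove the atom from $D_{t}$ to obtain the purely absolutely continuous correlations, which are explicit linear functionals of the Fourier coefficients $\hat{S}$; and finally exploit the multiplicative structure~(\ref{eq:fourier_coeff_of_gamma_alpha_t}) of $\hat{D}_{t}$ to isolate the weight $\bigl(\prod_{i}\hat{\gamma}_{t_{i}}(k_{i})\bigr)^{m}$ by inverting a triangular linear system. For the first stage: started from the uniform measure, $X_{t}$ has stationary independent increments, hence is a stationary process, and since $D_{t}\to U\left(\t^{d}\right)$ it is mixing, hence ergodic under time shifts; the continuous-time ergodic theorem then gives, for any fixed tuple $\boldsymbol{s}=\left(s_{1},\ldots,s_{k}\right)$ with $k\le n$,
\[
\frac{1}{\mathcal{T}}\int_{0}^{\mathcal{T}}f\left(X_{u}\right)f\left(X_{u+s_{1}}\right)\cdots f\left(X_{u+s_{1}+\cdots+s_{k}}\right)\,du\longrightarrow T_{k}\left(\boldsymbol{s}\right)\qquad\text{a.s. as }\mathcal{T}\to\infty,
\]
so each $T_{k}\left(\boldsymbol{s}\right)$ is computable from $f\left(X_{t}\right)$; since only finitely many tuples will be used for given $n,m,t_{1},\ldots,t_{n}$, I intersect the corresponding probability-one events.

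Next I remove the atom. Writing $D_{s}=\beta_{s}\delta+\left(1-\beta_{s}\right)\gamma_{s}$ (with $\beta_{s}$ known, as the reconstruction may depend on the law of $X_{t}$), and noting that setting $y_{i}=0$ merges two consecutive factors in the product defining $S_{n}$ (using $f=f^{2}$), so that the restriction of $S_{n}$ to $\left\{ y_{i}=0:i\in A\right\} $ equals $S_{n-\left|A\right|}$ in the remaining variables, an inclusion--exclusion over the set $A$ of coordinates that fall on the atom yields, for every $\boldsymbol{s}\in\r_{+}^{n}$,
\[
G\left(\boldsymbol{s}\right):=\int_{\t^{nd}}S_{n}\left(\boldsymbol{y}\right)\prod_{i=1}^{n}\gamma_{s_{i}}\left(y_{i}\right)d\boldsymbol{y}=\frac{1}{\prod_{i}\left(1-\beta_{s_{i}}\right)}\sum_{A\subseteq\left\{ 1,\ldots,n\right\} }\left(-1\right)^{\left|A\right|}\Bigl(\prod_{i\in A}\beta_{s_{i}}\Bigr)T_{n-\left|A\right|}\bigl(\boldsymbol{s}|_{A^{c}}\bigr),
\]
so $G\left(\boldsymbol{s}\right)$ is computable from the trace. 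As $S_{n}$ is bounded by $T_{0}$, it lies in $\mathrm{L}^{2}\left(\t^{nd}\right)$ with $\hat{S}\in\ell^{2}\left(\z^{nd}\right)$; combined with $\gamma_{s_{i}}\in\mathrm{L}^{2}\left(\t^{d}\right)$, Parseval gives $G\left(\boldsymbol{s}\right)=\left(2\pi\right)^{-nd}\sum_{\boldsymbol{k}}\hat{S}\left(\boldsymbol{k}\right)\prod_{i}\hat{\gamma}_{s_{i}}\left(k_{i}\right)$ (up to a conjugation/reflection of $\hat{S}$ fixed by the Fourier convention and immaterial here), with the series absolutely convergent by Cauchy--Schwarz; the bound $\left|\hat{\gamma}_{t_{i}}\right|\le1$ likewise makes the series in the statement absolutely convergent.

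For the third stage, fix $t_{1},\ldots,t_{n}$ and evaluate $G$ at the tuples $\left(j_{1}t_{1},\ldots,j_{n}t_{n}\right)$ with $j_{i}\in\left\{ 1,\ldots,m\right\} $. By~(\ref{eq:fourier_coeff_of_gamma_alpha_t}), for a positive integer $j$ we have $\hat{\gamma}_{jt_{i}}=P_{j}^{(i)}\bigl(\hat{\gamma}_{t_{i}}\bigr)$, where $P_{j}^{(i)}\left(w\right)=\left(\left(\beta_{t_{i}}+\left(1-\beta_{t_{i}}\right)w\right)^{j}-\beta_{t_{i}}^{j}\right)/\left(1-\beta_{t_{i}}^{j}\right)$ is a polynomial of degree $j$ with zero constant term and nonzero leading coefficient $\left(1-\beta_{t_{i}}\right)^{j}/\left(1-\beta_{t_{i}}^{j}\right)$. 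Expanding $\prod_{i}P_{j_{i}}^{(i)}\bigl(\hat{\gamma}_{t_{i}}\left(k_{i}\right)\bigr)$ and summing against $\hat{S}\left(\boldsymbol{k}\right)$ (the interchange of finite and infinite sums being legitimate by absolute convergence) expresses $G\left(j_{1}t_{1},\ldots,j_{n}t_{n}\right)$ as a linear combination of the quantities $B\left(\boldsymbol{l}\right):=\left(2\pi\right)^{-nd}\sum_{\boldsymbol{k}}\hat{S}\left(\boldsymbol{k}\right)\prod_{i}\hat{\gamma}_{t_{i}}\left(k_{i}\right)^{l_{i}}$, $\boldsymbol{l}\in\left\{ 1,\ldots,m\right\} ^{n}$, whose coefficient matrix is the tensor product of the $n$ lower-triangular matrices of coefficients of $P_{1}^{(i)},\ldots,P_{m}^{(i)}$ and is therefore invertible. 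Solving this system recovers every $B\left(\boldsymbol{l}\right)$, in particular $B\left(m,\ldots,m\right)$, which up to the factor $\left(2\pi\right)^{nd}$ and the harmless conjugation is exactly $\sum_{\boldsymbol{k}}\bigl(\prod_{i}\hat{\gamma}_{t_{i}}\left(k_{i}\right)\bigr)^{m}\hat{S}\left(\boldsymbol{k}\right)$.

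The step I expect to be the main obstacle is the first one: making rigorous that the $T_{k}$ are determined by the trace. This requires the ergodicity of the stationary, mixing, torus-valued process, a continuous-time ergodic theorem, the fact that the conclusion is unaffected by the (arbitrary) starting distribution, and the observation that it suffices to treat one time-tuple at a time (so that no uniform-in-$\boldsymbol{s}$ statement is needed). The second stage is a bookkeeping nuisance---disentangling the atom of $D_{t}$ and keeping the Fourier conventions straight---and the third stage is routine linear algebra once~(\ref{eq:fourier_coeff_of_gamma_alpha_t}) is available.
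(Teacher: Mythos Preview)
Your argument is correct and follows essentially the same route as the paper: compute the temporal correlations $T_{k}$ from the trajectory, use the temporal--spatial relation together with the decomposition $D_{t}=\beta_{t}\delta+(1-\beta_{t})\gamma_{t}$ to isolate the pure-$\gamma$ contribution, apply Parseval, and exploit~(\ref{eq:fourier_coeff_of_gamma_alpha_t}) to pass between $\hat{\gamma}_{t}^{m}$ and $\hat{\gamma}_{jt}$. The only cosmetic differences are that you remove the atom by an explicit M\"obius/inclusion--exclusion inversion where the paper does induction on $n$, and you recover the $m$-th power by inverting the lower-triangular system $\hat{\gamma}_{jt}=P_{j}(\hat{\gamma}_{t})$ where the paper performs the inverse rewriting $\hat{\gamma}_{t}^{m}=\sum_{j}c_{j}'\hat{\gamma}_{jt}$; these are the same linear-algebraic step read in opposite directions.
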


\begin{proof}
By (\ref{eq:fourier_coeff_of_gamma_alpha_t}), for every $m\in\n$
and $t>0$ we have 
\[
\hat{\gamma}_{mt}=\frac{\left(\beta_{t}+\left(1-\beta_{t}\right)\hat{\gamma}_{t}\right)^{m}-\beta_{t}^{m}}{1-\beta_{t}^{m}}.
\]
Rearranging, we can write $\hat{\gamma}_{t}^{m}$ as a sum of smaller
powers of $\hat{\gamma}_{t}$:
\[
\hat{\gamma}_{t}^{m}=\sum_{j=1}^{m-1}c_{j}\hat{\gamma}_{t}^{j}+\hat{\gamma}_{mt},
\]
where $c_{j}$ are some coefficients (in particular, when $D_{t}$
has no atom, i.e when $\beta_{t}=0$ for all $t>0$, this sum is rather
simple: $\hat{\gamma}_{t}^{m}=\hat{\gamma}_{mt}$). Reiterating this
process, we find that the $m$-th power of $\hat{\gamma}_{t}$ can
be written as some linear combination 
\[
\hat{\gamma}_{t}^{m}=\sum_{j=1}^{m-1}c_{j}'\hat{\gamma}_{jt}.
\]
Thus, the product $\left(\prod_{i=1}^{n}\hat{\gamma}_{t_{i}}\left(k_{i}\right)\right)^{m}$
is itself a sum of multilinear monomials in $\left\{ \hat{\gamma}_{jt_{i}}\right\} _{j=1}^{m-1}$,
and so to prove the proposition it suffices to prove it for $m=1$.

The temporal correlation $T_{n}\left(\boldsymbol{t}\right)$ can be
computed by the process $f\left(X_{t}\right)$: Since $X_{t}$ approaches
the uniform distribution on $\t^{d}$ as $t\to0$, for any fixed times
$t_{1},\ldots,t_{n}$, we can choose sampling times $\tau_{1},\tau_{2},\ldots$
so that $\left\{ f\left(X_{\tau_{j}}\right)f\left(X_{\tau_{j}+t_{1}}\right)\ldots f\left(X_{\tau_{j}+\sum_{i=1}^{n}t_{i}}\right)\right\} _{j=1}^{\infty}$
have pairwise correlations that are arbitrarily small, and are arbitrarily
close in distribution to $f\left(X_{0}\right)f\left(X_{t_{1}}\right)\cdots f\left(X_{\sum_{i=1}^{n}t_{i}}\right)$
with $X_{0}\sim U\left(\t^{d}\right)$. The temporal correlation $T_{n}\left(\boldsymbol{t}\right)$
is then given, with probability $1$, by the sample average at times
$\tau_{j}$.

A relation between the spatial and temporal correlation can be obtained
as follows. First, since $X_{0}$ is uniform on $\mathbb{T}^{d}$
in the definition of $T_{n}\left(\boldsymbol{t}\right)$, 
\[
T_{n}\left(\boldsymbol{t}\right)=\frac{1}{\left(2\pi\right)^{d}}\int_{\mathbb{T}^{d}}f\left(x\right)\e\left[\prod_{k=1}^{n}f\left(X_{\sum_{i=1}^{k}t_{i}}\right)\big|X_{0}=x\right]dx.
\]
By conditioning on the event that between times $t_{i-1}$ and $t_{i}$
the process took a step of size $y_{i}$, this is equal to 
\begin{align}
T_{n}\left(\boldsymbol{t}\right) & =\int_{\mathbb{T}^{d}}\int_{\mathbb{T}^{nd}}\left(\prod_{i=1}^{n}D_{t_{i}}\left(y_{i}\right)\right)\left(f\left(x\right)\prod_{k=1}^{n}\left[f\left(x+\sum_{i=1}^{k}y_{i}\right)\right]\right)\boldsymbol{d}\boldsymbol{y}dx\nonumber \\
 & =\int_{\t^{nd}}\left(\prod_{i=1}^{n}D_{t_{i}}\left(y_{i}\right)\right)S_{n}\left(\boldsymbol{y}\right)\boldsymbol{dy}.\label{eq:temporal_spatial_relation}
\end{align}
Since $D_{t}\left(y\right)=\beta_{t}\delta\left(y\right)+\left(1-\beta_{t}\right)\gamma_{t}\left(y\right)$,
the product inside the integral breaks into a sum, where, for each
time step $t_{i}$, we have to choose whether the process stayed in
place (corresponding the $\delta\left(y_{i}\right)$), or moved according
to the density $\gamma_{t_{i}}$. Whenever we choose to stay in place,
we shrink the number of spatial variables in our correlation, since
$S_{n}\left(y_{1},\ldots,y_{k},0,y_{k+2},\ldots,y_{n}\right)=S_{n-1}\left(y_{1},\ldots,y_{k},y_{k+2},\ldots,y_{n}\right)$.
We can thus go over all choices $A\subseteq\left[n\right]$ of indices
of times when the process moved according to $\gamma_{t}$, giving
\[
T_{n}\left(\boldsymbol{t}\right)=\sum_{A\subseteq\left[n\right]}\prod_{i\notin A}\beta_{t_{i}}\prod_{i\in A}\left(1-\beta_{t_{i}}\right)\int_{\t^{\abs Ad}}\left(\prod_{i\in A}\gamma_{t_{i}}\left(y_{i}\right)\right)S_{\abs A}\left(\boldsymbol{y}\text{ restricted to \ensuremath{A}}\right)\left(\prod_{i\in A}dy_{i}\right).
\]
The integral in this expression can be seen as an inner product between
$\prod_{i\in A}\gamma_{t_{i}}\left(y_{i}\right)$ and $\overline{S_{\abs A}\left(\boldsymbol{y}\right)}$
over the torus $\t^{\abs Ad}$. Since both $\gamma_{t}$ and $S_{\abs A}$
are in $\mathrm{L}^{2}$$\left(\t^{d}\right)$, by Parseval's theorem,
we can therefore replace it by a sum over all Fourier coefficients
$\boldsymbol{k}\in\z^{\abs Ad}$:
\begin{equation}
T_{n}\left(\boldsymbol{t}\right)=\frac{1}{\left(2\pi\right)^{d}}\sum_{A\subseteq\left[n\right]}\prod_{i\notin A}\beta_{t_{i}}\prod_{i\in A}\left(1-\beta_{t_{i}}\right)\sum_{\boldsymbol{k}\in\z^{\abs Ad}}\left(\prod_{i\in A}\hat{\gamma}_{t_{i}}\left(k_{i}\right)\right)\hat{S}_{\abs A}\left(\boldsymbol{k}\text{ restricted to \ensuremath{A}}\right).\label{eq:t_n_as_large_sum}
\end{equation}
With the above display, Proposition \ref{prop:reconstructing_monomials}
follows quickly by induction on $n$. For the case $n=1$, we have
\[
\left(2\pi\right)^{d}T_{1}\left(t\right)=\beta_{t}S_{0}+\sum_{k\in\z^{d}}\hat{\gamma}_{t}\left(k\right)\hat{S}_{1}\left(k\right).
\]
Since $S_{0}=T_{0}$, the value of $\sum_{k\in\z^{d}}\hat{\gamma}_{t}\left(k\right)\hat{S}_{1}\left(k\right)$
is known. The induction step for general $n$ is now immediate, since
by (\ref{eq:t_n_as_large_sum}) it is evident that $T_{n}\left(\boldsymbol{t}\right)$
is a multilinear polynomial in $\hat{\gamma}_{t_{i}}$, and all terms
with degree strictly smaller than $n$ are known by the induction
hypothesis.
\end{proof}
\begin{proof}[Proof of Proposition \ref{prop:reconstructing_s_n}]
We will now show that for every $n\in\n$, the values 
\[
\left\{ \sum_{\boldsymbol{k}\in\z^{nd}}\left(\prod_{i=1}^{n}\hat{\gamma}_{t_{i}}\left(k_{i}\right)\right)^{m}\hat{S}_{n}\left(\boldsymbol{k}\right)\mid m\in\n,t_{1},\ldots,t_{n}>0\right\} 
\]
uniquely determine $\hat{S}_{n}\left(\boldsymbol{k}\right)$. Proposition
\ref{prop:reconstructing_s_n} then follows, since (as a quick, omitted,
calculation shows) $S_{n}\left(\boldsymbol{y}\right)$ is continuous
on $\t^{d}$, and is thus completely determined by its Fourier coefficients. 

Suppose that there exists a bounded function $Q_{n}\left(\boldsymbol{y}\right)$
on $\t^{nd}$ such that for all $m\in\n$ and all times $t_{1},\ldots,t_{n}>0$,
\[
\sum_{\boldsymbol{k}\in\z^{nd}}\left(\prod_{i=1}^{n}\hat{\gamma}_{t_{i}}\left(k_{i}\right)\right)^{m}\hat{S}_{n}\left(\boldsymbol{k}\right)=\sum_{\boldsymbol{k}\in\z^{nd}}\left(\prod_{i=1}^{n}\hat{\gamma}_{t_{i}}\left(k_{i}\right)\right)^{m}\hat{Q}_{n}\left(\boldsymbol{k}\right).
\]
Then, denoting $x_{\boldsymbol{k}}=\hat{S}_{n}\left(\boldsymbol{k}\right)-\hat{Q}_{n}\left(\boldsymbol{k}\right)$,
we have that 
\[
\sum_{\boldsymbol{k}\in\z^{nd}}\left(\prod_{i=1}^{n}\hat{\gamma}_{t_{i}}\left(k_{i}\right)\right)^{m}x_{\boldsymbol{k}}=0.
\]
Note that $x_{\boldsymbol{k}}\in\ell^{2}\left(\c\right)$, since both
$S_{n}$ and $Q_{n}$ are in $\mathrm{L}^{2}\left(\t^{d}\right)$.
We wish to use Lemma \ref{lem:infinite_vandemonde} to show that necessarily
$x_{\boldsymbol{k}}=0$.

To do this, we must choose the times $t_{i}$ so that the products
$\prod_{i=1}^{n}\hat{\gamma}_{t_{i}}\left(k_{i}\right)$ are all non-zero
and distinct. Then, setting $z_{\boldsymbol{k}}=\prod_{i=1}^{n}\hat{\gamma}_{t_{i}}\left(k_{i}\right)$,
we will have that $z_{\boldsymbol{k}}\in\ell^{2}\left(\c\right)$
(since $\hat{\gamma}_{t_{i}}$ are the Fourier coefficients of a square
integrable function $\gamma_{t_{i}}$), $z_{\boldsymbol{k}}$ are
all distinct, and $z_{\boldsymbol{k}}\neq0$ for all $k\in\z^{nd}$,
exactly meeting the requirements of the lemma with $p=q=1/2$. 

To choose the times $t_{i}$, recall that by assumption, there exists
a time $t_{0}$ such that $\left\{ \hat{\gamma}_{t_{0}}\left(k\right)\right\} _{k\in\z^{d}}$
are all distinct and nonzero. We will show that there exist numbers
$\alpha_{1},\ldots,\alpha_{n}>0$, so that if $t_{i}=\alpha_{i}t_{0}$,
then the products $\left\{ \prod_{i=1}^{n}\hat{\gamma}_{t_{i}}\left(k_{i}\right)\right\} _{\boldsymbol{k}\in\z^{nd}}$
satisfy the above requirements. By (\ref{eq:fourier_coeff_of_gamma_alpha_t}),
for every $\alpha_{i}$ we have 
\begin{equation}
\hat{\gamma}_{\alpha_{i}t_{0}}=\frac{\left(\beta_{t_{0}}+\left(1-\beta_{t_{0}}\right)\hat{\gamma}_{t_{0}}\right)^{\alpha_{i}}-\beta_{t_{0}}^{\alpha_{i}}}{1-\beta_{t_{0}}^{\alpha_{i}}}.\label{eq:uniqueness_of_fourier_expand}
\end{equation}
For a particular $k\in\z^{d}$, let $B_{k}=\left\{ \alpha>0\mid\hat{\gamma}_{\alpha t_{0}}\left(k\right)=0\right\} $
be the set of ``bad'' multipliers for $\hat{\gamma}_{t_{0}}\left(k\right)$.
The coefficient $\hat{\gamma}_{\alpha t_{0}}\left(k\right)$ can be
$0$ only if 
\[
\left(\beta_{t_{0}}+\left(1-\beta_{t_{0}}\right)\hat{\gamma}_{t_{0}}\left(k\right)\right)^{\alpha_{i}}-\beta_{t_{0}}^{\alpha_{i}}=0,
\]
and since $\hat{\gamma}_{t_{0}}\left(k\right)\neq0$, the function
\[
z\mapsto\left(\beta_{t_{0}}+\left(1-\beta_{t_{0}}\right)\hat{\gamma}_{t_{0}}\left(k\right)\right)^{z}-\beta_{t_{0}}^{z}
\]
is a non-constant holomorphic function of $z$. The set of zeros $B_{k}$
is thus isolated, and in particular countable. 

Now let $\boldsymbol{k}\neq\boldsymbol{k}'\in\z^{nd}$ be two different
vectors, and let $B_{\boldsymbol{k},\boldsymbol{k}'}=\left(\union_{i=1}^{n}B_{k_{i}}\right)\bigcup\left(\union_{i=1}^{n}B_{k_{i}'}\right)$
be the set of bad $\alpha_{i}$'s which cause one of the $\hat{\gamma}_{\alpha_{i}t_{0}}$
to be $0$. Let 
\[
R_{\boldsymbol{k,k'}}=\left\{ \boldsymbol{\alpha}\in\r_{>0}^{n}\mid\all j\,\alpha_{j}\notin B_{\boldsymbol{k},\boldsymbol{k}'}\text{ and }\prod_{i=1}^{n}\hat{\gamma}_{\alpha_{i}t_{0}}\left(k_{i}\right)=\prod_{i=1}^{n}\hat{\gamma}_{\alpha_{i}t_{0}}\left(k_{i}'\right)\right\} .
\]
By (\ref{eq:uniqueness_of_fourier_expand}), this means that for every
$\boldsymbol{\alpha}\in R_{\boldsymbol{k},\boldsymbol{k}'}$,
\[
\prod_{i=1}^{n}\frac{\left(\beta_{t_{0}}+\left(1-\beta_{t_{0}}\right)\hat{\gamma}_{t_{0}}\left(k_{i}\right)\right)^{\alpha_{i}}-\beta_{t_{0}}^{\alpha_{i}}}{\left(1-\beta_{t_{0}}^{\alpha_{i}}\right)}-\prod_{i=1}^{n}\frac{\left(\beta_{t_{0}}+\left(1-\beta_{t_{0}}\right)\hat{\gamma}_{t_{0}}\left(k_{i}'\right)\right)^{\alpha_{i}}-\beta_{t_{0}}^{\alpha_{i}}}{\left(1-\beta_{t_{0}}^{\alpha_{i}}\right)}=0.
\]
Let $i^{*}$ be an index so that $k_{i^{*}}\neq k_{i^{*}}'$. Since
none of the factors $\hat{\gamma}_{\alpha_{i}t_{0}}\left(k_{i}\right)$
or $\hat{\gamma}_{\alpha_{i}t_{0}}\left(k_{i}'\right)$ are $0$ by
choice of $R_{\boldsymbol{k},\boldsymbol{k}'}$, we can rearrange
the above, yielding 
\[
\frac{1-\left(1+\frac{1-\beta_{t_{0}}}{\beta_{t_{0}}}\hat{\gamma}_{t_{0}}\left(k_{i^{*}}\right)\right)^{\alpha_{i^{*}}}}{1-\left(1+\frac{1-\beta_{t_{0}}}{\beta_{t_{0}}}\hat{\gamma}_{t_{0}}\left(k_{i^{*}}'\right)\right)^{\alpha_{i^{*}}}}-\prod_{i\neq i^{*}}\frac{\left(\beta_{t_{0}}+\left(1-\beta_{t_{0}}\right)\hat{\gamma}_{t_{0}}\left(k_{i}'\right)\right)^{\alpha_{i}}-\beta_{t_{0}}^{\alpha_{i}}}{\left(\beta_{t_{0}}+\left(1-\beta_{t_{0}}\right)\hat{\gamma}_{t_{0}}\left(k_{i}\right)\right)^{\alpha_{i}}-\beta_{t_{0}}^{\alpha_{i}}}=0.
\]
For fixed $\left\{ \alpha_{i}\right\} _{i\neq i^{*}}$, the expression
on the left-hand side is a function of the form $z\mapsto\frac{1-a^{z}}{1-b^{z}}-c$.
Since $\left\{ \hat{\gamma}_{t_{0}}\left(k\right)\right\} _{k\in\z^{d}}$
are all distinct, $a\neq b$. Thus this map is a non-constant holomorphic
function on $\c\backslash B_{\boldsymbol{k},\boldsymbol{k}'}$, and
so for any fixed choice of $\left\{ \alpha_{i}\right\} _{i\neq i^{*}}$,
has only countably many zeros, i.e only countably many choices for
$\alpha_{i^{*}}$. The Lebesgue measure of $R_{\boldsymbol{k,k'}}$
in $\r^{n}$ is therefore $0$. But the set of all $\alpha_{1},\ldots,\alpha_{n}>0$
such that either there are two equal nonzero products in $\left\{ \prod_{i=1}^{n}\hat{\gamma}_{\alpha_{i}t_{0}}\left(k_{i}\right)\right\} _{\boldsymbol{k}\in\z^{nd}}$
or one of the products is itself $0$ is the countable union 
\[
\bigcup_{\boldsymbol{k},\boldsymbol{k'}\in\z^{nd}}R_{\boldsymbol{k},\boldsymbol{k'}}\union\left(\bigcup_{\boldsymbol{k},\boldsymbol{k}'\in\z^{nd}}\left\{ \left(\alpha_{1},\ldots,\alpha_{k}\right)\mid\alpha_{i}\in\left(B_{k_{i}}\union B_{k_{i}'}\right)\right\} \right),
\]
and so too has measure $0$. In particular, there must exist $\alpha_{1},\ldots,\alpha_{n}>0$
so that the products are distinct and nonzero, as needed.
\end{proof}
\begin{proof}[Proof of Theorem \ref{thm:general_reconstruction}]
We now show that knowledge of $S_{n}\left(\boldsymbol{y}\right)$
is enough to reconstruct $f$ up to a translation of the torus. The
main idea is this. Suppose that $S_{n}\left(\boldsymbol{y}\right)>0$
for some given $\boldsymbol{y}\in\r^{nd}$. Then $\int_{\mathbb{T}^{d}}\prod_{k=0}^{n}f\left(x+\sum_{i=1}^{k}y_{i}\right)dx>0$,
which means that there exists a point $x_{0}\in\mathbb{T}^{d}$ such
that $f\left(x_{0}+\sum_{i=1}^{k}y_{i}\right)=1$ for all $k=0,\ldots,n$.
By considering only $y_{i}$'s which partition $\mathbb{T}^{d}$ into
a grid, we can get an approximation of $\Omega$ by taking the union
of the grid blocks.

Let $\delta_{m}=2\pi/m$, let $n=m^{d}$, and consider the set $\mathcal{Y}=\left\{ \boldsymbol{y}\in\delta\cdot\n^{nd}\mid S_{n}\left(\boldsymbol{y}\right)>0\right\} $.
Note that $\left(2\pi\right)^{d}S_{n}\left(\boldsymbol{0}\right)=\int_{\mathbb{T}^{d}}f\left(x\right)dx=\mu\left(\Omega\right)$
(where $\mu$ is the Lebesgue measure), so if $\mathcal{Y}=\left\{ \emptyset\right\} $
then $\Omega=\emptyset$, and $f$ is identically zero. We may therefore
assume that $\mathcal{Y\neq\emptyset}$.

Each vector $\boldsymbol{y}\in\mathcal{Y}$ defines a set of points
$G_{\boldsymbol{y}}=\left\{ \sum_{i=1}^{k}y_{i}\mid k=0,\ldots,n\right\} $,
where the sum $\sum_{i=1}^{k}y_{i}$ is taken to be in the torus $\mathbb{T}^{d}$.
Since $\delta_{m}$ divides the side-length of the torus, $G_{\boldsymbol{y}}$
can be viewed as a subset of a $d$-dimensional grid in $\mathbb{T}^{d}$,
with the individual $y_{i}$ serving as ``pointer vectors'' to the
next point in the grid. The number of points in $G_{\boldsymbol{y}}$
depends on $\boldsymbol{y}$: If $\boldsymbol{y}=0$, for example,
then $G_{\boldsymbol{y}}=\left\{ 0\right\} $; however, we can also
choose $\boldsymbol{y}$ such that $G_{\boldsymbol{y}}=\delta\z^{d}\intersect\mathbb{T}^{d}$.

Let $\boldsymbol{y}^{*}\in\mathcal{Y}$ be such that $\abs{G_{\boldsymbol{y}*}}\geq\abs{G_{\boldsymbol{y}}}$
for all $\boldsymbol{y}\in\mathcal{Y}$, and let $G_{m}^{*}=G_{\boldsymbol{y}^{*}}$,
so that $G_{m}^{*}$ is a largest possible subset when the pointer
vectors are taken from $\mathcal{Y}$. Using $G_{m}^{*}$, we can
now define a rough, shifted approximation $\Omega_{m}$ to the domain
$\Omega$: Letting $C_{d}=\left[-\frac{1}{2},\frac{1}{2}\right]^{d}$
be the unit $d$-dimensional cube, we cover each point $x\in G_{m}^{*}$
by the scaled cube $\delta_{m}C_{d}$:
\[
\Omega_{m}=G_{m}^{*}+\delta_{m}C_{d}
\]
(here we use the Minkowski sum for the addition of two sets / the
addition of a point and a set). See Figure \ref{fig:omega_approximation}
for an example of this procedure in $2$ dimensions. 

\begin{figure}[H]
\begin{centering}
\begin{minipage}[t]{0.45\columnwidth}%
\begin{center}
\includegraphics[scale=0.33]{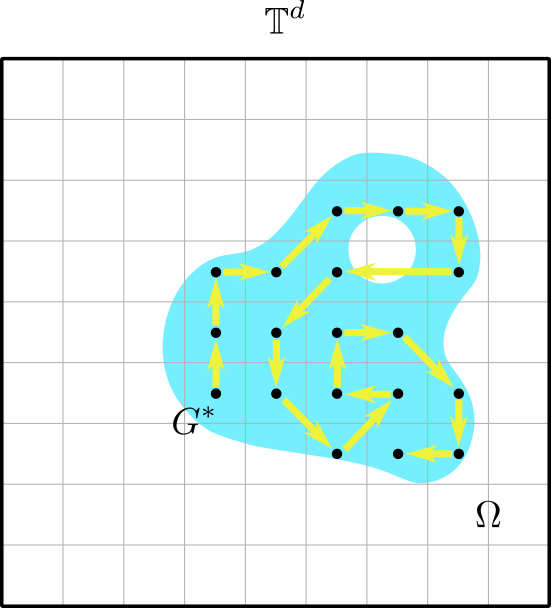}
\par\end{center}%
\end{minipage}\hfill{}%
\begin{minipage}[t]{0.45\columnwidth}%
\begin{center}
\includegraphics[scale=0.33]{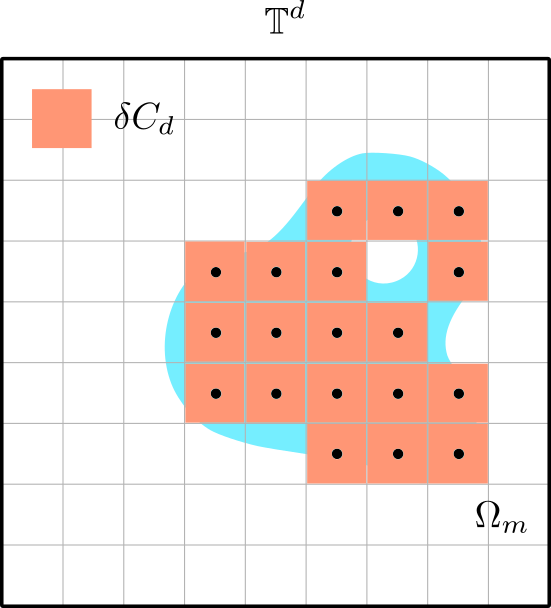}
\par\end{center}%
\end{minipage}
\par\end{centering}
\caption{\textbf{Left}: The domain $\Omega$ (cyan), together with a maximal
grid $G_{m}^{*}$ (black dots). The origin is shifted so that all
grid points fall in $\Omega$. The yellow vectors represent a possible
choice for $y_{i}$. \textbf{Right}: The resultant approximation $\Omega_{m}$.\label{fig:omega_approximation}}
\end{figure}

We now claim that $\Omega_{m}\to\Omega$ up to translations, in the
sense that there is a shift $\theta\in\mathbb{T}^{d}$ such that the
symmetric difference vanishes: $\mu\left(\left(\Omega_{m}+\theta\right)\Delta\Omega\right)\to0$
as $m\to\infty$. To see this, let's look separately at the contribution
of $\left(\Omega_{m}+\theta\right)\backslash\Omega$ and the contribution
of $\Omega\backslash\left(\Omega_{m}+\theta\right)$. First, as mentioned
above, since $S_{n}\left(\boldsymbol{y}\right)>0$, there is a point
$x_{m}$ such that $G_{m}^{*}+x_{m}\subseteq\Omega$. Adding $\delta_{m}C_{d}$
to both sides gives $\Omega_{m}+x_{m}\subseteq\Omega+\delta_{m}C_{d}$.
We then have 
\[
\mu\left(\left(\Omega_{m}+x_{m}\right)\backslash\Omega\right)\leq\mu\left(\left(\Omega+\delta_{m}C_{d}\right)\backslash\Omega\right)\leq\mu\left(\partial\Omega+\delta_{m}C_{d}\right).
\]
The latter expression goes to $0$ as $\delta_{m}\to0$, since 
\[
\lim_{m\to\infty}\mu\left(\partial\Omega+\delta_{m}C_{d}\right)=\mu\left(\bigcap_{m}\left(\partial\Omega+\delta_{m}C_{d}\right)\right)\stackrel{\text{\ensuremath{\partial\Omega} is closed}}{=}\mu\left(\partial\Omega\right)=0.
\]
Suppose now that $x\in\Omega\backslash\left(\Omega_{m}+x_{m}\right)$,
and let $z\in\delta_{m}\mathbb{Z}^{d}\intersect\mathbb{T}^{d}$ be
a grid-point closest to $x-x_{m}$. The point $z$ cannot be in $G_{m}^{*}$:
If it were, then the cube $z+\delta_{m}C_{d}$ (which contains $x-x_{m}$)
would be contained in $\Omega_{m}$, contradicting the fact that $x\notin\Omega_{m}+x_{m}$.
Since $\abs{G_{m}^{*}}$ is maximal, we necessarily have $z+x_{m}\notin\Omega$
(otherwise we could add it to $G_{m}^{*}$). So every point not in
$\Omega_{m}+x_{m}$ can be covered by placing the cube $\delta_{m}C_{d}$
on some point in $\left(\Omega+x_{m}\right)^{c}$. Thus
\begin{align*}
\mu\left(\Omega\backslash\left(\Omega_{m}+x_{m}\right)\right) & =\mu\left(\left(\Omega_{m}+x_{m}\right)^{c}\backslash\Omega^{c}\right)\\
 & \leq\mu\left(\left(\Omega^{c}+\delta_{m}C_{d}\right)\backslash\Omega^{c}\right),
\end{align*}
and again the latter goes to $0$ as $\delta_{m}\to0$. 

We thus have a sequence of vectors $x_{m}\in\t^{d}$ such that $\mu\left(\left(\Omega_{m}+x_{m}\right)\Delta\Omega\right)\to0$
as $m\to\infty$. Since $\t^{d}$ is compact, $x_{m}$ has a subsequence
converging to some $\theta$, and it follows that $\mu\left(\left(\Omega_{m}+\theta\right)\Delta\Omega\right)\to0$
as well.
\end{proof}

\section{Symmetric reconstruction\label{sec:symmetric_reconstruction}}

The proof of Theorem \ref{thm:symmetric_reconstruction} is similar
to that of Theorem \ref{thm:general_reconstruction}. The main difference
is that since $\hat{\gamma}_{t}\left(k\right)=\hat{\gamma}_{t}\left(-k\right)$
for all $k$, we cannot immediately use Lemma \ref{lem:infinite_vandemonde}
to recover $S_{n}\left(\boldsymbol{y}\right)$ anymore. This can be
overcome by working with a completely symmetric version of $S_{n}\left(\boldsymbol{y}\right)$,
denoted $\sigma_{n}\left(\boldsymbol{y}\right)$ and defined as 
\[
\sigma_{n}\left(\boldsymbol{y}\right)=\sum_{\boldsymbol{\eps}\in\left\{ -1,1\right\} ^{n}}S_{n}\left(\eps_{1}y_{1},\ldots,\eps_{n}y_{n}\right).
\]

\begin{prop}
Under the conditions of Theorem \ref{thm:symmetric_reconstruction},
$\sigma_{n}\left(\boldsymbol{y}\right)$ can be calculated from $f\left(X_{t}\right)$
with probability $1$.
\end{prop}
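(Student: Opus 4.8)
The plan is to mimic the proof of Proposition~\ref{prop:reconstructing_monomials}, but with the symmetrized object $\sigma_n$ playing the role of $S_n$. First I would observe that the relation \eqref{eq:t_n_as_large_sum} expressing $T_n(\boldsymbol t)$ as a multilinear polynomial in the $\hat\gamma_{t_i}$ still holds verbatim, and I would apply it with the signs of the time-steps encoded differently: the key point is that because $\gamma_t$ is symmetric, $\hat\gamma_t(k)=\hat\gamma_t(-k)$, so for each $A\subseteq[n]$ the inner sum $\sum_{\boldsymbol k\in\z^{|A|}}\bigl(\prod_{i\in A}\hat\gamma_{t_i}(k_i)\bigr)\hat S_{|A|}(\boldsymbol k\text{ restricted to }A)$ can be rewritten by grouping each $k_i$ with $-k_i$. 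Collecting the $2^{|A|}$ sign patterns produces exactly the Fourier coefficients of $\sigma_{|A|}$ (up to a normalization): concretely $\hat\sigma_n(\boldsymbol k)=\sum_{\boldsymbol\eps}\hat S_n(\eps_1 k_1,\dots,\eps_n k_n)$, and summing $\prod_{i\in A}\hat\gamma_{t_i}(k_i)$ against $\hat S_{|A|}$ over all of $\z^{|A|d}$ is the same as summing against $\tfrac{1}{2^{|A|}}\hat\sigma_{|A|}$ over a fundamental domain, or equivalently against $\hat\sigma_{|A|}$ over all of $\z^{|A|d}$ with the standard weights. So $T_n(\boldsymbol t)$ is also a multilinear polynomial in the $\hat\gamma_{t_i}$ whose coefficients are the sums $\sum_{\boldsymbol k}\bigl(\prod_{i\in A}\hat\gamma_{t_i}(k_i)\bigr)\hat\sigma_{|A|}(\boldsymbol k)$.

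Next I would run the same induction on $n$ as in Proposition~\ref{prop:reconstructing_monomials}: for $n=1$, $(2\pi)^d T_1(t)=\beta_t\sigma_0+\sum_{k}\hat\gamma_t(k)\hat\sigma_1(k)$ (using $\sigma_0=2S_0=2T_0$, or the appropriate constant), so $\sum_k\hat\gamma_t(k)\hat\sigma_1(k)$ is known; and the general step follows because the top-degree term of the polynomial is the only unknown one. I would also repeat the power-reduction trick via \eqref{eq:fourier_coeff_of_gamma_alpha_t}: writing $\hat\gamma_t^m$ as a linear combination of $\hat\gamma_{jt}$, $j\le m$, shows that the higher-power sums $\sum_{\boldsymbol k}\bigl(\prod_i\hat\gamma_{t_i}(k_i)\bigr)^m\hat\sigma_n(\boldsymbol k)$ are also computable from $f(X_t)$. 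The conclusion of this auxiliary step is a symmetric analog of Proposition~\ref{prop:reconstructing_monomials}: for all $n,m\in\n$ and all $t_1,\dots,t_n>0$, the quantity $\sum_{\boldsymbol k\in\z^{nd}}\bigl(\prod_{i=1}^n\hat\gamma_{t_i}(k_i)\bigr)^m\hat\sigma_n(\boldsymbol k)$ can be computed from $f(X_t)$ with probability $1$. Finally I would invoke Lemma~\ref{lem:infinite_vandemonde} exactly as in the proof of Proposition~\ref{prop:reconstructing_s_n}: choosing multipliers $\alpha_i$ so that the products $z_{\boldsymbol k}=\prod_i\hat\gamma_{\alpha_i t_0}(k_i)$ are nonzero and distinct, the values above determine $\hat\sigma_n$ uniquely, and $\sigma_n$ is recovered since it is continuous on $\t^{nd}$.

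The one genuine subtlety, and the step I expect to be the main obstacle, is the choice of the multipliers $\alpha_i$: in the symmetric case the products $z_{\boldsymbol k}$ are \emph{never} all distinct, because $z_{\boldsymbol k}=z_{\boldsymbol k'}$ whenever $\boldsymbol k'$ is obtained from $\boldsymbol k$ by flipping signs of coordinates. This is fine — indeed it is the whole reason for symmetrizing — but it means I cannot apply Lemma~\ref{lem:infinite_vandemonde} on all of $\z^{nd}$ directly; I must restrict to a set of representatives. The clean fix is to index by the quotient of $\z^{nd}$ under the sign-flip action (say, choosing for each orbit the lexicographically smallest representative with nonnegative leading entries), and to note that $\hat\sigma_n$, being a sum over the orbit, is constant on each orbit, so the equation $\sum_{\boldsymbol k}z_{\boldsymbol k}^m\hat\sigma_n(\boldsymbol k)=0$ collapses to a sum over orbit representatives with multiplicities absorbed into the unknowns. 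On this reduced index set I then need the representatives' $z$-values to be distinct and nonzero; since the hypothesis of Theorem~\ref{thm:symmetric_reconstruction} only guarantees $\{\hat\gamma_{t_0}(k)\}_{k\ge 0}$ are distinct and nonzero, I would re-run the holomorphy/countable-zero-set argument of Proposition~\ref{prop:reconstructing_s_n} restricted to pairs of distinct orbit representatives, concluding again that the bad set of $\boldsymbol\alpha$ has measure zero and hence a good choice exists. With distinct nonzero generators in hand, Lemma~\ref{lem:infinite_vandemonde} with $p=q=1/2$ forces the orbit-indexed differences to vanish, i.e. $\hat\sigma_n=\hat Q_n$, completing the proof.
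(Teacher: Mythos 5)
Your proposal is correct and follows essentially the same route as the paper: the paper symmetrizes the relation \eqref{eq:temporal_spatial_relation} in physical space by splitting each $y_i$-integral and changing variables $y_i\to-y_i$, whereas you perform the equivalent grouping of $k_i$ with $-k_i$ in Fourier space, but both arrive at the same symmetric analog of Proposition \ref{prop:reconstructing_monomials}. The subtlety you flag — that the products $z_{\boldsymbol k}$ cannot be distinct over all of $\z^{n}$ and one must pass to sign-flip orbit representatives with multiplicities absorbed into the unknowns — is exactly what the paper does by restricting to $\boldsymbol k\in\z_+^{n}$ and defining $\alpha_n(\boldsymbol k)=2^{\#\{i\mid k_i\neq0\}}\hat\sigma_n(\boldsymbol k)$, so your treatment is, if anything, slightly more explicit than the paper's on this point.
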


\begin{proof}
The proof uses the same techniques as that of Proposition \ref{prop:reconstructing_s_n};
we highlight the differences here. Starting with the temporal-spatial
relation (\ref{eq:temporal_spatial_relation}),

\begin{align*}
T_{n}\left(\boldsymbol{t}\right) & =\int_{\t^{n}}\left(\prod_{i=1}^{n}D_{t_{i}}\left(y_{i}\right)\right)S_{n}\left(y_{1},\ldots,y_{n}\right)\boldsymbol{dy},
\end{align*}
observe that each integral of the form $\int_{-\pi}^{\pi}D_{t_{i}}\left(y_{i}\right)S_{n}\left(y_{1},\ldots,y_{n}\right)dy_{i}$
can be split into two parts:
\[
\int_{-\pi}^{\pi}D_{t_{i}}\left(y_{i}\right)S_{n}\left(y_{1},\ldots,y_{n}\right)dy_{i}=\int_{-\pi}^{0}D_{t_{i}}\left(y_{i}\right)S_{n}\left(y_{1},\ldots,y_{n}\right)dy_{i}+\int_{0}^{\pi}D_{t_{i}}\left(y_{i}\right)S_{n}\left(y_{1},\ldots,y_{n}\right)dy_{i},
\]
where we use the convention that $\int_{-\pi}^{0}\delta\left(y\right)g\left(y\right)dy=\frac{1}{2}\lim_{\eps\to0^{+}}\int_{-\pi}^{\eps}\delta\left(y\right)g\left(y\right)dy=\frac{1}{2}g\left(0\right)$.
Making the change of variables $y_{i}\to-y_{i}$ in the first integral
and using the fact that $D_{t}$ is symmetric, we thus have 

\begin{align*}
\int_{-\pi}^{\pi}D_{t_{i}}\left(y_{i}\right)S_{n}\left(y_{1},\ldots,y_{n}\right)dy_{i} & =\frac{1}{2}\int_{-\pi}^{\pi}D_{t_{i}}\left(y_{i}\right)\left[S_{n}\left(y_{1},\ldots,-y_{i},\ldots,y_{n}\right)+S_{n}\left(y_{1},\ldots,y_{i},\ldots,y_{n}\right)\right]dy_{i}.
\end{align*}
Performing this $n$ times yields 
\begin{align}
T_{n}\left(\boldsymbol{t}\right) & =\frac{1}{2^{n}}\int_{\t^{n}}\left(\prod_{i=1}^{n}D_{t_{i}}\left(y_{i}\right)\right)\sigma_{n}\left(\boldsymbol{y}\right)\boldsymbol{dy}.\label{eq:symmetric_integral_equation}
\end{align}
As in the proof of Proposition \ref{prop:reconstructing_monomials},
this allows us to calculate the sum 
\[
\sum_{\boldsymbol{k}\in\z^{n}}\left(\prod_{i=1}^{n}\hat{\gamma}_{t_{i}}\left(k_{i}\right)\right)^{m}\hat{\sigma}_{n}\left(\boldsymbol{k}\right)
\]
for all $n,m\in\n$ and every set of times $t_{1},\ldots,t_{n}>0$.
Now, both $\left(\prod_{i=1}^{n}\gamma_{t_{i}}\left(y_{i}\right)\right)$
and $\sigma_{n}\left(\boldsymbol{y}\right)$ are symmetric in every
coordinate $y_{i}$, and so the Fourier coefficients are invariant
under flipping of individual entries. We can therefore restrict the
sum to non-negative $\boldsymbol{k}$ vectors: Defining 
\[
\alpha_{n}\left(\boldsymbol{k}\right)=2^{\#\left\{ i\mid k_{i}\neq0\right\} }\hat{\sigma}_{n}\left(\boldsymbol{k}\right),
\]
we can calculate the sum
\[
\sum_{\boldsymbol{k}\in\z_{+}^{n}}\left(\prod_{i=1}^{n}\hat{\gamma}_{t_{i}}\left(k_{i}\right)\right)^{m}\alpha_{n}\left(\boldsymbol{k}\right)
\]
for all $n,m\in\n$ and times $t_{1},\ldots,t_{n}>0$. As in the proof
of Proposition \ref{prop:reconstructing_s_n}, $\alpha_{n}$ (and
therefore $\sigma_{n}$) can be recovered from these quantities using
Lemma \ref{lem:infinite_vandemonde}, since by assumption there is
a time $t_{0}$ such that $\left\{ \hat{\gamma}_{t_{0}}\left(k\right)\right\} _{k\geq0}$
are all distinct and non-zero. 
\end{proof}
\begin{prop}
Let $\delta>0$ divide $2\pi$. Given $\sigma_{n}\left(\boldsymbol{y}\right)$
for all $n\in\n$ and $\boldsymbol{y}\in\t^{n}$, it is possible to
calculate $S_{n}\left(\boldsymbol{y}\right)+S_{n}\left(-\boldsymbol{y}\right)$
for all $n\in\n$ and all $\boldsymbol{y}\in\left\{ \left(k_{1}\delta,\ldots,k_{n}\delta\right)\mid k_{i}\in\z_{+}\right\} $.
\end{prop}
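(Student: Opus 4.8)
The plan is to recast the claim as a statement about the \emph{reflective symmetrization} $A_n(\boldsymbol y):=S_n(\boldsymbol y)+S_n(-\boldsymbol y)$ and to prove by induction that $A_n(\boldsymbol y)$ is computable for every grid tuple $\boldsymbol y$. The basic observation is that, because $f$ is a $\{0,1\}$-valued indicator and the correlation integral runs over all of $\t$, the quantity $S_n(\boldsymbol y)$ depends only on the \emph{set} of partial sums $P(\boldsymbol y):=\{0,\,y_1,\,y_1+y_2,\,\ldots,\,\sum_{i=1}^n y_i\}\subseteq\t$: writing $C(Q):=\frac1{2\pi}\int_\t\prod_{q\in Q}f(x+q)\,dx$ for a finite $Q\subseteq\t$, we have $S_n(\boldsymbol y)=C(P(\boldsymbol y))$, and since $P(-\boldsymbol y)=-P(\boldsymbol y)$ also $A_n(\boldsymbol y)=C(P(\boldsymbol y))+C(-P(\boldsymbol y))=:\Phi(P(\boldsymbol y))$. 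The functional $\Phi$ is invariant under translating and under negating its argument set (hence $A_n$ is invariant under reversing $\boldsymbol y$ and under deleting zero entries). So for any grid tuple $\boldsymbol y$ we may pass to its \emph{reduced form}: list the elements of $P(\boldsymbol y)$ inside a shortest arc of $\t$ containing them as $q_0<q_1<\cdots<q_r$, and put $\boldsymbol y^{\flat}:=(q_1-q_0,\ldots,q_r-q_{r-1})$, a tuple of $r=|P(\boldsymbol y)|-1$ strictly positive grid points with $\sum_i (\boldsymbol y^{\flat})_i=q_r-q_0=:w(\boldsymbol y)$. Then $A_n(\boldsymbol y)=\Phi(P(\boldsymbol y))=A_r(\boldsymbol y^{\flat})$. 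Crucially, since the elements of $P(\boldsymbol y)$ are multiples of $\delta$ (this is where $\delta\mid 2\pi$ enters), the largest cyclic gap is at least $\delta$, so $w(\boldsymbol y)\le 2\pi-\delta<2\pi$; and $w(\boldsymbol y)$ is itself a multiple of $\delta$, taking finitely many values.

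I induct on $w(\boldsymbol y)$. If $w(\boldsymbol y)=0$ then $P(\boldsymbol y)=\{0\}$ and $A_n(\boldsymbol y)=2S_0$, which is known (e.g.\ $S_0=\tfrac12\lim_{y\to0}\sigma_1(y)$); and if $|P(\boldsymbol y)|=2$ then $\boldsymbol y^{\flat}$ is a single positive grid point and $A_n(\boldsymbol y)=A_1(\boldsymbol y^{\flat})=\sigma_1(\boldsymbol y^{\flat})$. For the inductive step write $w:=w(\boldsymbol y)>0$, $r:=|P(\boldsymbol y)|-1\ge 2$ and $\boldsymbol z:=\boldsymbol y^{\flat}$ (so $z_i>0$ and $\sum_i z_i=w<2\pi$). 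Pairing each sign pattern $\boldsymbol\eps\in\{-1,1\}^r$ with $-\boldsymbol\eps$ and using $S_r(\boldsymbol\eps\boldsymbol z)+S_r(-\boldsymbol\eps\boldsymbol z)=A_r(\boldsymbol\eps\boldsymbol z)$, the defining formula for $\sigma_r$ becomes
\[
\sigma_r(\boldsymbol z)=\sum_{\boldsymbol\eps\in\{-1,1\}^r}S_r(\eps_1 z_1,\ldots,\eps_r z_r)=A_r(\boldsymbol z)+\sum_{[\boldsymbol\eps]\neq[\boldsymbol 1]}A_r(\boldsymbol\eps\boldsymbol z),
\]
the last sum being over the $2^{r-1}-1$ pairs $[\boldsymbol\eps]=\{\boldsymbol\eps,-\boldsymbol\eps\}$ other than $[\boldsymbol 1]$. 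Since $\sigma_r(\boldsymbol z)$ is given, it suffices to show that each such ``mixed'' term $A_r(\boldsymbol\eps\boldsymbol z)$ is already known; by the inductive hypothesis this will follow once we prove $w(\boldsymbol\eps\boldsymbol z)<w$, for then $A_r(\boldsymbol\eps\boldsymbol z)=\Phi(P(\boldsymbol\eps\boldsymbol z))=A_{r'}\big((\boldsymbol\eps\boldsymbol z)^{\flat}\big)$ with $(\boldsymbol\eps\boldsymbol z)^{\flat}$ a grid tuple of strictly smaller width.

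So the crux is the inequality $w(\boldsymbol\eps\boldsymbol z)<w$ for mixed $\boldsymbol\eps$. Form the partial sums of $\boldsymbol\eps\boldsymbol z$ using the real step sizes $\eps_i z_i\in(-2\pi,2\pi)$, before reducing modulo $2\pi$: this is a path $0=p_0,p_1,\ldots,p_r$ in $\r$. Let $U$ be the sum of the positive steps and $D$ the sum of the absolute values of the negative steps, so $U+D=\sum_i z_i=w$, and $U>0$, $D>0$ because $\boldsymbol\eps$ has both signs. Writing the position at any time as (partial up-sum) $-$ (partial down-sum), a one-line case analysis on whether the running maximum of the path is attained before or after the running minimum gives $\max_\ell p_\ell-\min_\ell p_\ell\le \max(U,D)<U+D=w$. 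Thus the $p_\ell$ all lie in a real interval of length $<w<2\pi$, so their reductions modulo $2\pi$ — precisely the elements of $P(\boldsymbol\eps\boldsymbol z)$ — lie in an arc of $\t$ of length $<w$, whence $w(\boldsymbol\eps\boldsymbol z)<w$. This closes the induction. The main obstacle is exactly this width-reduction step together with the surrounding wrap-around bookkeeping: the estimate only works because passing first to the reduced form forces $\sum_i z_i<2\pi$, so that a ``folded'' (mixed-sign) path genuinely fits in a strictly shorter arc and cannot wind around the torus; the remaining ingredients ($S_n=C\circ P$, translation/negation invariance of $\Phi$, that $\boldsymbol y^{\flat}$ is a legitimate grid tuple with width $<2\pi$, and the base cases) are routine.
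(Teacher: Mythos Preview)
Your proof is correct and follows essentially the same strategy as the paper's: decompose $\sigma_n$ into sign patterns, isolate the $\boldsymbol\eps=\pm\boldsymbol 1$ pair as the desired $S_n(\boldsymbol y)+S_n(-\boldsymbol y)$, and show that every mixed pattern, after sorting its partial sums and taking forward differences, yields a strictly smaller instance. Your formalization via the set $P(\boldsymbol y)$, the width $w$, and the inequality $\max_\ell p_\ell-\min_\ell p_\ell\le\max(U,D)<U+D$ makes rigorous exactly the step the paper states informally (``the partial sums can be seen as the forward-only jumps of some $\boldsymbol y'$ with $\sum k_i'<\sum k_i$''), but the induction scheme and the underlying idea are the same.
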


\begin{proof}
The proof is by induction. For $n=1$, we just have $\sigma_{1}\left(y\right)=S_{1}\left(y\right)+S_{1}\left(-y\right)$,
and for general $n$ and $\boldsymbol{k}=\boldsymbol{0}$, we just
have $\sigma_{n}\left(\boldsymbol{0}\right)=2^{n}S_{n}\left(\boldsymbol{0}\right)$.
Now let $n\in\n$ and $\boldsymbol{k}\in\z_{+}^{n}$. Assume that
the statement holds true for $n$ for all vectors $\boldsymbol{k}'$
with $\sum_{i=1}^{n}k_{i}'<\sum_{i=1}^{n}k_{i}$, and also for all
$m<n$. We have 
\begin{align*}
\sigma_{n}\left(k_{1}\delta,\ldots,k_{n}\delta\right) & =\sum_{\boldsymbol{\eps}\in\left\{ -1,1\right\} ^{n}}S_{n}\left(\eps_{1}k_{1}\delta,\ldots,\eps_{n}k_{n}\delta\right)\\
 & =S_{n}\left(k_{1}\delta,\ldots,k_{n}\delta\right)+S_{n}\left(-k_{1}\delta,\ldots,-k_{n}\delta\right)+\sum_{\boldsymbol{\eps}\in\left\{ -1,1\right\} ^{n},\,\eps_{i}\text{ not all equal}}S_{n}\left(\ldots\right).
\end{align*}
If not all $\eps_{i}$ are equal, then $S_{n}\left(\eps_{1}k_{1}\delta,\ldots,\eps_{n}k_{n}\delta\right)$
is equal to some $S_{m}\left(k'_{1}\delta,\ldots,k_{m}'\delta\right)$
for $m\leq n$: The total sum $\sum_{i}\eps_{i}k_{i}$ is strictly
smaller than $\sum_{i}k_{i}$, and so the partial sums can be seen
as the forward-only jumps of some $\boldsymbol{y}'$ with corresponding
$\boldsymbol{k}'$ such that $\sum_{i}k_{i}'<\sum_{i}k_{i}$ (the
strict $m<n$ case is when the partial sums $\sum\eps_{i}k_{i}$ themselves
are not unique). See Figure \ref{fig:equivalence_of_pointers} for
a visualization. Similarly, $S_{n}\left(-\eps_{1}k_{1}\delta,\ldots,-\eps_{n}k_{n}\delta\right)=-S_{m}\left(-k'_{1}\delta,\ldots,-k_{m}'\delta\right)$.
Since the sum over all $\eps_{i}$ can be split into polar pairs,
by the induction hypothesis we can calculate the sum $\sum_{\boldsymbol{\eps}\in\left\{ -1,1\right\} ^{n},\,\eps_{i}\text{ not all equal}}S_{n}$,
and therefore also $S_{n}\left(k_{1}\delta,\ldots,k_{n}\delta\right)+S_{n}\left(-k_{1}\delta,\ldots,-k_{n}\delta\right).$

\begin{figure}[H]
\begin{centering}
\includegraphics[width=0.33\textheight]{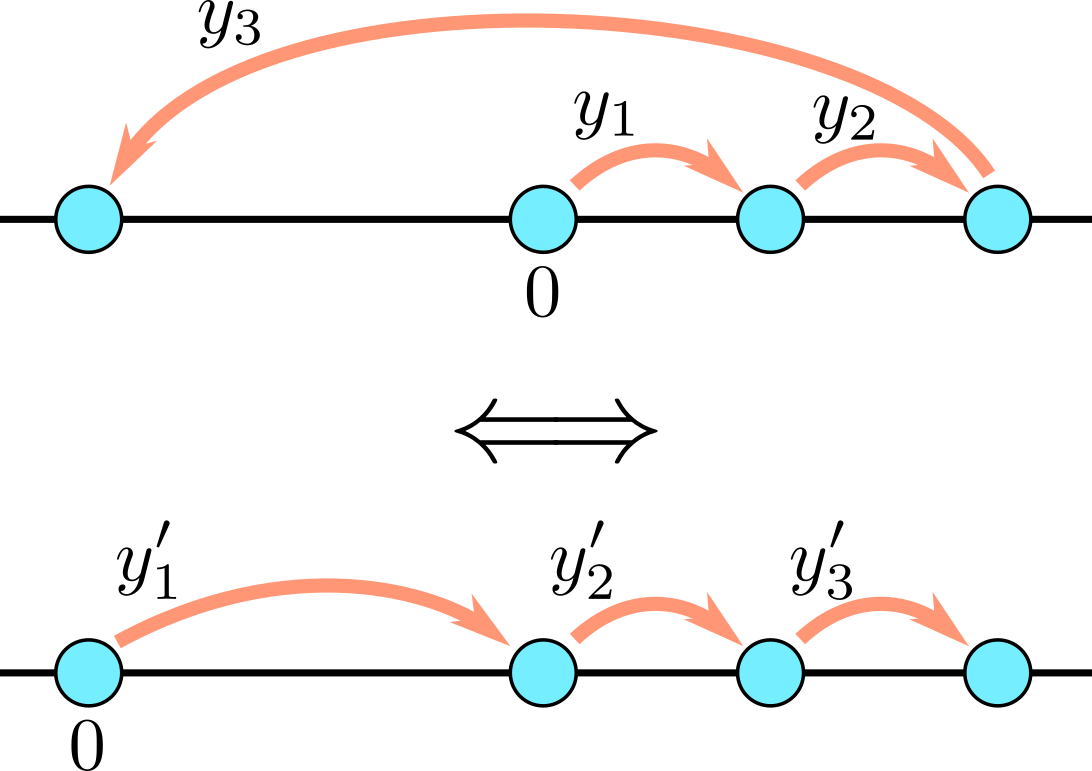}
\par\end{centering}
\caption{$S_{3}\left(y_{1},y_{2},y_{3}\right)=S_{3}\left(y_{1}',y_{2}',y_{3}'\right)$.
The choice of origin does not matter, since $S_{n}$ integrates over
all $\protect\t$ anyway. The total length $y_{1}'+y_{2}'+y_{3}'$
is smaller than $y_{1}+y_{2}+y_{3}$.\label{fig:equivalence_of_pointers}}

\end{figure}
\end{proof}
\begin{proof}[Proof sketch of Theorem \ref{thm:symmetric_reconstruction}]
Similarly to the proof of Theorem \ref{thm:general_reconstruction},
once we know $S_{n}\left(k_{1}\delta,\ldots,k_{n}\delta\right)+S_{n}\left(-k_{1}\delta,\ldots,-k_{n}\delta\right)$,
for every $\delta_{n}=2\pi/n$ we can construct a set $\Omega_{n}$
according a $\boldsymbol{y}\in\left\{ \boldsymbol{y}\in\delta\cdot\n^{n}\mid S_{n}\left(\boldsymbol{y}\right)+S_{n}\left(-\boldsymbol{y}\right)>0\right\} $
which maximizes $G_{\boldsymbol{y}}$. The resulting $f_{n}$ contains
a subsequence which converges to either $f\left(x\right)$ or $f\left(-x\right)$,
where the ambiguity is because we do not know which of the two of
$S_{n}\left(\boldsymbol{y}\right)$ and $S_{n}\left(-\boldsymbol{y}\right)$
was greater than $0$.
\end{proof}

\section{Example: Brownian motion\label{sec:example_brownian_motion}}

\subsection{Proof of Corollary \ref{cor:brownian_motion_can_feel}}
\begin{proof}
For $d=1$, the step distribution $\gamma_{t}$ of Brownian motion
on $\t$ is that of a wrapped normal distribution with drift, and
is given by 
\[
\gamma_{t}\left(y\right)=\frac{1}{\sqrt{2t\pi}}\sum_{m=-\infty}^{\infty}e^{-\left(y-vt+2\pi m\right)^{2}/2t}.
\]
The Fourier coefficients of $\gamma_{t}\left(y\right)$ can readily
be calculated, by observing that the wrap-around gives the continuous
Fourier transform evaluated at integer points:

\begin{align*}
\hat{\gamma}_{t}\left(k\right) & =\int_{-\pi}^{\pi}e^{-iky}\frac{1}{\sqrt{2t\pi}}\sum_{m=-\infty}^{\infty}e^{-\left(y-vt+2\pi m\right)^{2}/2t}dy\\
 & =\int_{-\infty}^{\infty}e^{-iky}\frac{1}{\sqrt{2t\pi}}e^{-\left(y-vt\right)^{2}/2t}dy\\
 & =e^{-ivtk}e^{-2t\pi^{2}k^{2}}.
\end{align*}
For standard Brownian motion, without drift, the coefficients are
symmetric, but $\left\{ \hat{\gamma}_{t}\left(k\right)\right\} _{k\geq0}$
are all distinct, and so by Theorem \ref{thm:symmetric_reconstruction},
reconstruction is possible using Brownian motion up to rotations and
reflections. 

The drift, however, can make the coefficients distinct. For $d=1$,
any non-zero drift will do, and reconstruction is possible up to rotations.
In the general case, the Fourier coefficients of $\gamma_{t}$ are
\[
\hat{\gamma}_{t}\left(k\right)=e^{-it\sum_{j=1}^{d}v_{j}\cdot k_{j}}e^{-2t\pi^{2}\sum_{j}k_{j}^{2}}\,\,\,\,;\,\,\,k\in\z^{d}.
\]
In order for the coefficients to be distinct, it suffices to make
the factors $e^{-it\sum_{j=1}^{d}v_{j}\cdot k_{j}}$ all distinct,
i.e there should exist a time $t$ such that for every $k\neq k'\in\z^{d}$
and every $m\in\z$,
\[
\sum_{j=1}^{d}v_{j}\left(k_{j}-k_{j}'\right)\neq2\pi m/t.
\]
Choosing $t$ so that $\left\{ v_{1},\ldots,v_{d},\frac{2\pi}{t}\right\} $
are all rationally independent completes the proof.
\end{proof}

\subsection{An explicit inversion}

As a side note, we would like to mention that for Brownian motion,
inverting the symmetric integral (\ref{eq:symmetric_integral_equation})
(note the changed range of integration), 
\[
T_{n}\left(\boldsymbol{t}\right)=\int_{\left[0,\pi\right]^{n}}\left(\prod_{i=1}^{n}\gamma_{t_{i}}\left(y_{i}\right)\right)\sigma_{n}\left(\boldsymbol{y}\right)\boldsymbol{dy},
\]
is possible without resorting to Lemma \ref{lem:infinite_vandemonde}:
There exists an explicit relation between $\sigma_{n}\left(\boldsymbol{y}\right)$
and $T_{n}\left(\boldsymbol{t}\right)$. The relation appears in (e.g)
\cite[3.2-6, item 21]{polyanin_manzhirov_handbook_of_integral_equations}.
We repeat the arguments here for completeness.

If we treat $\sigma_{n}\left(\boldsymbol{y}\right)$ as a periodic
function over all $\r^{n}$ where every coordinate has period $2\pi$,
we can replace the folded normal distribution $\gamma_{t}$ with a
normal distribution over all $\r_{+}$:
\[
T_{n}\left(\boldsymbol{t}\right)=\int_{\boldsymbol{y}\in\r_{+}^{n}}\left(\prod_{i=1}^{n}\frac{1}{\sqrt{2\pi t_{i}}}e^{-\frac{y_{i}^{2}}{2t_{i}}}\right)\sigma_{n}\left(\boldsymbol{y}\right)\boldsymbol{dy}.
\]
Let $\boldsymbol{p}\in\r_{+}^{n}$. Multiplying both sides by $e^{-p_{1}t_{1}-\ldots-p_{n-1}t_{n-1}}$
and integrating all $t_{i}$s from $0$ to $\infty$, we get
\[
\mathcal{L}\left\{ T_{n}\right\} \left(\boldsymbol{p}\right)=\int_{\boldsymbol{t}\in\r_{+}^{n}}\int_{\boldsymbol{y}\in\r_{+}^{n}}\left(\prod_{i=1}^{n}\frac{1}{\sqrt{2\pi t_{i}}}e^{-\frac{y_{i}^{2}}{2t_{i}}}e^{-p_{i}t_{i}}\right)\sigma_{n}\left(\boldsymbol{y}\right)\boldsymbol{dy}\boldsymbol{dt},
\]
where $\mathcal{L}\left\{ f\right\} $ is the Laplace transform of
$f$. The individual integrals over $t_{i}$ in the right hand side
can be readily calculated to be: 
\[
\int_{0}^{\infty}e^{-pt-y^{2}/2t}\frac{1}{\sqrt{2\pi t}}dt=\frac{1}{\sqrt{2p}}e^{-\sqrt{2p}y}.
\]
This gives 
\[
\mathcal{L}\left\{ T_{n}\right\} \left(\boldsymbol{p}\right)=\int_{\r_{+}^{n}}\left(\prod_{i=1}^{n}\frac{1}{\sqrt{2p_{i}}}e^{-\sqrt{2p_{i}}y}\right)\sigma_{n}\left(\boldsymbol{y}\right)\boldsymbol{dy}.
\]
Up to a change of variables $s_{i}=\sqrt{2p_{i}}$, the right hand
side is the Laplace transform of $\sigma_{n}\left(\boldsymbol{y}\right)$.
Thus
\[
\sigma_{n}=\mathcal{L}^{-1}\left\{ \left(\prod_{i=1}^{n}s_{i}\right)\mathcal{L}\left\{ T_{n}\right\} \left(\frac{1}{2}s_{1}^{2},\ldots,\frac{1}{2}s_{n}^{2}\right)\right\} .
\]

\section{Proof of Lemma \ref{lem:infinite_vandemonde}\label{sec:vandermonde_lemma}}
\begin{proof}
As noted in Remark \ref{rem:vandermonde_multiplication_is_ok}, the
equality $V\boldsymbol{x}=0$ means that for every index $i\in\n$
we have 
\begin{equation}
0=\sum_{j=1}^{\infty}V_{ij}x_{j}=\sum_{j=1}^{\infty}z_{j}^{i}x_{j}.\label{eq:sum_equals_zero}
\end{equation}
Since $\boldsymbol{z}\in\ell^{p}\left(\c\right)$ and $\boldsymbol{x}\in\ell^{q}\left(\c\right)$,
by Hölder's inequality the series $\left(z_{j}^{i}x_{j}\right)_{j}$
is absolutely convergent for all $i$, and we can change the order
of summation; without loss of generality we can assume that $z_{n}$
are ordered so that $\abs{z_{1}}\geq\abs{z_{2}}\geq\ldots$. 

Assume by induction that $x_{1},\ldots,x_{k}$ have already been shown
to be equal to $0$, and let $\ell>0$ be such that $\abs{z_{k+1}},\ldots,\abs{z_{k+\ell}}$
are all of equal magnitudes, but $\abs{z_{k+\ell}}>\abs{z_{k+\ell+1}}$.
Since $z_{n}\to0$, $\ell$ is necessarily finite. For any fixed $i$,
the sum (\ref{eq:sum_equals_zero}) can then be split into two parts:
\[
\sum_{s=1}^{\ell}z_{k+s}^{i}x_{k+s}+\sum_{j>k+\ell}z_{j}^{i}x_{j}=0.
\]
Dividing by $z_{k+\ell}^{i}$ and denoting $\omega_{s}=\frac{z_{k+s}}{z_{k+\ell}}$,
we have

\[
\sum_{s=1}^{\ell}\omega_{s}^{i}x_{k+s}+\sum_{j>k+\ell}\left(\frac{z_{j}}{z_{k+\ell}}\right)^{i}x_{j}=0.
\]
Consider the $\ell$ equations of the above form for $i=r\cdot m$,
where $r=1,\ldots,\ell$ and $m$ is a large number. In matrix form,
this system of $\ell$ equations can be written as 
\begin{equation}
V_{m}\tilde{\boldsymbol{x}}+\boldsymbol{u}=0,\label{eq:finite_vandermonde}
\end{equation}
where:
\begin{enumerate}
\item $V_{m}$ is a finite $\ell\times\ell$ Vandermonde matrix with generators
$\omega_{s}^{m}=\left(\frac{z_{k+s}}{z_{k+\ell}}\right)^{m}$, $s=1,\ldots,\ell$.
\item $\tilde{\boldsymbol{x}}$ is a vector of size $\ell$ with $\tilde{x}_{s}=x_{k+s}$
for $s=1,\ldots,\ell$.
\item \label{enu:vandermonde_u_has_bounded_entries}$\boldsymbol{u}$ is
a vector of length $\ell$ with entries $u_{r}=\sum_{j>k+\ell}\left(\frac{z_{j}}{z_{k+\ell}}\right)^{rm}x_{j}$
for $r=1,\ldots,\ell$. By our ordering and choice of $\ell$, $\abs{z_{k+\ell}}>\abs{z_{j}}$
for $j>k+\ell$, and we can factor out an exponentially decreasing
term from each summand:
\begin{align*}
\abs{u_{r}} & \leq\abs{\frac{z_{k+\ell+1}}{z_{k+\ell}}}^{rm}\sum_{j>k+\ell}\abs{\frac{z_{j}}{z_{k+\ell+1}}}^{rm}\abs{x_{j}}\\
\left(\text{Hölder's inequality}\right) & \leq\abs{\frac{z_{k+\ell+1}}{z_{k+\ell}}}^{rm}\left(\sum_{j>k+\ell}\abs{\frac{z_{j}}{z_{k+\ell+1}}}^{prm}\right)^{1/p}\norm{\boldsymbol{x}}_{q}.
\end{align*}
The sum $\sum_{j>k+\ell}\abs{\frac{z_{j}}{z_{k+\ell+1}}}^{prm}$ is
finite since $\boldsymbol{z}\in\ell^{p}\left(\c\right)$, and is in
fact uniformly bounded as a function of $m$, since every summand
has magnitude less than or equal to $1$. Since $\abs{z_{k+\ell+1}}<\abs{z_{k+\ell}}$,
the term $\abs{\frac{z_{k+\ell+1}}{z_{k+\ell}}}^{rm}$ goes to $0$
as $m\to\infty$, and so the entries of $\boldsymbol{u}$ also decrease
to $0$ as $m\to\infty$.
\end{enumerate}
The generators of $V_{m}$ are all distinct, and so $V_{m}$ is invertible.
Equation (\ref{eq:finite_vandermonde}) thus gives 
\[
\tilde{\boldsymbol{x}}=-V_{m}^{-1}\boldsymbol{u}.
\]
As mentioned in item (\ref{enu:vandermonde_u_has_bounded_entries}),
the entries of $\boldsymbol{u}$ decay to $0$ in $m$. To show that
$\boldsymbol{\tilde{x}}=0$ (and thus all of $x_{k+s}=0$ for $s=1,\ldots,\ell$),
it therefore suffices to uniformly bound the infinity-norm $\norm{V_{m}^{-1}}_{\infty}$
for infinitely many $m$. Recall that the inverse of the $\ell\times\ell$
Vandermonde matrix with generators $\omega_{s}^{m}$ has entries 
\begin{equation}
\left(V_{m}^{-1}\right)_{ij}=\left(-1\right)^{j-1}\dfrac{C_{ij}}{\omega_{i}^{m}\prod_{\stackrel{1\leq k\leq\ell}{k\neq i}}\left(\omega_{i}^{m}-\omega_{k}^{m}\right)},\label{eq:vandemonde_inverse_entries}
\end{equation}
where
\[
C_{ij}=\begin{cases}
\sum_{\stackrel{1\leq k_{1}<\ldots<k_{n-j}\leq\ell}{m_{1},\ldots,m_{\ell-j}\neq i}}\omega_{m_{1}}\cdots\omega_{m_{\ell-j}} & 1\leq j<\ell\\
1 & j=\ell.
\end{cases}
\]
Since the generators $\omega_{s}^{m}$ are all on the unit circle,
the numerator $C_{ij}$ is always bounded by a constant independent
of $m$. For the denominator, it suffices to show that we can choose
infinitely many $m$ such that $\abs{\omega_{i}^{m}-\omega_{j}^{m}}$
is uniformly bounded away from $0$ for all $i\neq j$.
\begin{lem}[Recurrent rotations]
\label{lem:recurrent_rotations}Let $\ell>0$ be an integer and let
$\eps>0$. There exists a constant $C_{\eps,\ell}$ such that for
every set of distinct points $\omega_{1},\ldots,\omega_{\ell}$ on
the unit circle, there is an integer $1\leq m\leq C_{\eps,\ell}$
such that 

\[
\frac{1}{\pi}\abs{\arg\left(\omega_{i}^{m}\right)}\leq\eps\,\,\,\,i=1,\ldots,\ell,
\]
where $\arg:\c\to\left(-\pi,\pi\right]$ returns the angle with the
origin.
\end{lem}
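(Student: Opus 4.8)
The plan is to prove Lemma \ref{lem:recurrent_rotations} by a pigeonhole/simultaneous-approximation argument on the torus $\t^{\ell}$, where the coordinates track the arguments $\arg(\omega_i^m)$ as functions of $m$. Write $\theta_i = \frac{1}{2\pi}\arg(\omega_i) \in (-\tfrac12,\tfrac12]$, so that multiplying $\omega_i$ by the power $m$ corresponds to the rotation $m\theta_i$ on $\r/\z$. The orbit of the point $(\theta_1,\ldots,\theta_\ell)$ under the map $v\mapsto v+(\theta_1,\ldots,\theta_\ell)$ visits, for $m=0,1,2,\ldots,N$, a collection of $N+1$ points in $\t^\ell$. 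Partition $\t^\ell$ into $\lceil 2/\eps\rceil^\ell$ axis-aligned sub-cubes of side at most $\eps/2$. If $N+1 > \lceil 2/\eps\rceil^\ell$, two of the visited points, say at times $m_1 < m_2$, lie in the same sub-cube; then $m := m_2 - m_1$ satisfies $\|m\theta_i\|_{\r/\z} \le \eps/2 < \eps$ for every $i$, which is exactly the claimed bound $\frac{1}{\pi}|\arg(\omega_i^m)| \le \eps$ (note $1 \le m \le N$). Setting $C_{\eps,\ell} = \lceil 2/\eps\rceil^\ell$ gives a bound depending only on $\eps$ and $\ell$, not on the particular points $\omega_i$, which is the crucial uniformity.

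The main subtlety — and the reason one must be slightly careful — is that the $\omega_i$ are only assumed \emph{distinct}, not that the $\theta_i$ are rationally independent or even irrational; some of them could be rational, or zero. This does not affect the pigeonhole argument at all, since we never need the orbit to be infinite or dense: among $N+1$ points in a space partitioned into fewer than $N+1$ cells, two must collide regardless of any arithmetic structure. (If some $\omega_i^m = 1$ already for small $m$, so much the better; the bound still holds.) Thus there is really no obstacle; the one thing to state cleanly is the reduction from the multiplicative circle to additive $\r/\z$ and the observation that $\|m\theta_i\|_{\r/\z}\le\eps/2$ translates to $\frac{1}{\pi}|\arg(\omega_i^m)|\le\eps$.

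I would then record the conclusion back in the Vandermonde context: applying the lemma with a fixed small $\eps$ (say $\eps = 1/4$) to the $\ell$ distinct points $\omega_1,\ldots,\omega_\ell$ on the unit circle, we obtain arbitrarily large admissible $m$ — indeed, in any window $\{M, M+1, \ldots, M + C_{\eps,\ell}\}$ we may run the same pigeonhole argument on the shifted orbit $m\mapsto (M+m)\theta$, producing an $m$ in that window with all $\omega_i^m$ close to $1$; for such $m$, the points $\omega_1^m,\ldots,\omega_\ell^m$ all lie in a common arc of angular width $\le 2\pi\eps < \pi$, hence they remain distinct and the pairwise gaps $|\omega_i^m - \omega_j^m|$ are bounded below by a constant depending only on $\ell$ and the original gaps $\min_{i\neq j}|\omega_i-\omega_j|$ — in fact one gets a cleaner lower bound by noting that $|\omega_i^m-\omega_j^m|$ is continuous in the joint configuration and the configuration stays in a compact region of "all points within angle $\eps$ of $1$, pairwise distinct with the same cyclic order," but the honest statement we need is just that infinitely many such $m$ exist with $\min_{i\neq j}|\omega_i^m-\omega_j^m|$ bounded away from $0$ uniformly. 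This supplies the required uniform bound on the denominators in \eqref{eq:vandemonde_inverse_entries}, hence on $\|V_m^{-1}\|_\infty$, completing the induction in the proof of Lemma \ref{lem:infinite_vandemonde}.
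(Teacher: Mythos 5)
Your proof of the lemma itself is correct, but it takes a genuinely different route from the paper. You use the classical Dirichlet pigeonhole argument: pass to the additive torus $\t^{\ell}$ via $\theta_i=\frac{1}{2\pi}\arg(\omega_i)$, partition into $\lceil 2/\eps\rceil^{\ell}$ cubes of side $\le\eps/2$, and take the difference $m=m_2-m_1$ of two colliding orbit times. This is clean, needs no arithmetic hypotheses on the $\theta_i$ (as you correctly note, distinctness and irrationality play no role in the recurrence statement), and yields the essentially optimal constant $C_{\eps,\ell}=\lceil 2/\eps\rceil^{\ell}$. The paper instead argues by induction on $\ell$: for $\ell=1$ it runs an iterative "halving" procedure on the angle (repeatedly powering up to land in successively smaller symmetric intervals), giving $C_{\eps,1}=(\lceil 1/\eps\rceil+1)^{\log_2(1/\eps)}$, and for general $\ell$ it first handles $\omega_1,\ldots,\omega_{\ell-1}$ with the much smaller tolerance $\eps/C_{\eps,1}$ and then corrects $\omega_{\ell}$ by a further power $m_2\le C_{\eps,1}$. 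The inductive construction produces a far worse (iterated) constant and is more delicate to state; your argument is both shorter and quantitatively stronger, and since only the existence of \emph{some} uniform $C_{\eps,\ell}$ is used downstream, it is a drop-in replacement.

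One caveat on your closing paragraph (which goes beyond the lemma): the claim that running the pigeonhole on the shifted orbit $m\mapsto(M+m)\theta$ produces an admissible $m$ \emph{inside} the window $\{M,\ldots,M+C_{\eps,\ell}\}$ is not right as stated --- the collision again yields a difference $m_2-m_1\le C_{\eps,\ell}$, not an exponent in that window. The paper obtains infinitely many usable exponents differently: if all $\arg(\omega_s)/\pi$ are rational one gets periodicity for free, and otherwise the exponents $m_n$ produced by the lemma with $\eps_n\to0$ must contain a subsequence tending to infinity (since $\abs{\arg(\omega_{s^*}^{m})}$ is bounded below on any finite set of $m$ when $\arg(\omega_{s^*})/\pi$ is irrational); one then uses $m_n+1$, for which $\omega_i^{m_n+1}-\omega_j^{m_n+1}\approx\omega_i-\omega_j$. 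Your Dirichlet proof of the lemma is compatible with that same case split, so the gap is only in the auxiliary remark, not in the proof of the stated lemma.
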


\begin{proof}
The proof is by induction on $\ell$. For $\ell=1$, denote $\omega=e^{\pi i\alpha}$
for $\alpha\in\left[-1,1\right]$. If $\abs{\alpha}\leq\eps$, then
$m=1$ will do. Otherwise, let $k>0$ be the smallest integer such
that $\alpha\in\left[-2^{k}\eps,2^{k}\eps\right]$. Then one of the
points $y\in\left\{ \omega,\omega^{2},\ldots,\omega^{\ceil{1/\alpha}+1}\right\} $
satisfies $y=e^{\pi i\beta}$ with $\beta\in\left[-2^{k-1}\eps,2^{k-1}\eps\right]$
: The points $\omega,\omega^{2},\ldots,\omega^{\ceil{1/\alpha}+1}$
make at least one complete revolution around the unit circle, but
since the angle between two consecutive points is at most $2^{k}\eps$,
one of these points must fall in the interval $\left[-2^{k-1}\eps,2^{k-1}\eps\right]$.
Repeating this procedure iteratively, we obtain a sequence of points
$y_{1}=e^{\pi i\beta_{1}},\ldots,y_{q}=e^{\pi i\beta_{q}}$, where:
\begin{enumerate}
\item $y_{1}=\omega=e^{\pi i\alpha}.$
\item $y_{i+1}=y_{i}^{\ell_{i}}$ for some $\ell_{i}\leq\ceil{1/\abs{\beta_{i}}}+1$.
\item $\frac{1}{\pi}\abs{\arg\left(y_{q}\right)}\leq\eps$
\item The number of iterations $q$ is bounded by $\log_{2}1/\eps+1$.
\end{enumerate}
Apart from $\beta_{q}$, the magnitude of each $\beta_{i}$ is larger
than $\eps$. Thus each $\ell_{i}\leq\ceil{1/\eps}+1$, and we have
\[
\eps\geq\frac{1}{\pi}\abs{\arg\left(y_{q}\right)}=\frac{1}{\pi}\abs{\arg\omega^{\ell_{1}^{\ldots^{\ell_{q-1}}}}}=\frac{1}{\pi}\abs{\arg\omega^{\ell_{1}\cdot\ldots\cdot\ell_{q-1}}},
\]
yielding an $m\leq\prod_{j=1}^{q-1}\ell_{j}\leq\left(\ceil{1/\eps}+1\right)^{\log_{2}1/\eps}$.
We therefore have $C_{\eps,1}=\left(\ceil{1/\eps}+1\right)^{\log_{2}1/\eps}$. 

Now assume by induction that the statement holds for all $n<\ell$.
By applying the induction hypothesis on the first $\ell-1$ points
with $\eps'=\eps/C_{\eps,1}$, we obtain an integer $m_{1}\leq C_{\eps',\ell-1}$
such that for $i=1,\ldots,\ell-1$, 
\begin{equation}
\frac{1}{\pi}\abs{\arg\left(\omega_{i}^{m_{1}}\right)}\leq\frac{\eps}{C_{\eps,1}}.\label{eq:first_l-1_points_ok}
\end{equation}
This does not give any bound on $\arg\left(\omega_{\ell}^{m_{1}}\right)$.
However, we can now apply the lemma for a single point $\omega_{\ell}^{m_{1}}$
and $\eps$, obtaining an $m_{2}\leq C_{\eps,1}$ such that 
\[
\frac{1}{\pi}\abs{\arg\left(\left(\omega_{\ell}^{m_{1}}\right)^{m_{2}}\right)}\leq\eps.
\]
Choosing $m=m_{1}m_{2}\leq C_{\eps,1}C_{\eps',\ell-1}$ yields the
required bound on $\omega_{\ell}$; as for $i<\ell,$ using (\ref{eq:first_l-1_points_ok}),
we have
\[
\frac{1}{\pi}\abs{\arg\left(\left(\omega_{i}^{m_{1}}\right)^{m_{2}}\right)}\leq\frac{\eps}{C_{\eps,1}}\cdot m_{2}\leq\eps
\]
as well.
\end{proof}
We can now finish the proof of Lemma \ref{lem:infinite_vandemonde}.
If all the ratios $\left\{ \arg\left(\omega_{s}\right)/\pi\right\} _{s=1}^{\ell}$
are rational, then there are infinitely many $m$'s such that 
\[
\omega_{s}^{m}=\omega_{s}
\]
for all $s=1,\ldots,\ell$. The denominator in (\ref{eq:vandemonde_inverse_entries})
stays the same in this case for all such $m$. Otherwise, there is
an $s^{*}$ such that $\arg\left(\omega_{s^{*}}\right)$ is an irrational
multiple of $\pi$. Let $\eps_{n}\to0$ be a positive sequence and
let $m_{n}$ be the number of rotations given by Lemma \ref{lem:recurrent_rotations}
applied to $\left\{ \omega_{s}\right\} _{s=1}^{\ell}$ with $\eps_{n}$.
Then $m_{n}$ has a subsequence which diverges to infinity, since
for any finite set of values of $m$, $\arg\left(\omega_{s^{*}}^{m}\right)$
is bounded below. For all large enough $n$, we necessarily have $\abs{\omega_{i}^{m_{n}+1}-\omega_{j}^{m_{n}+1}}\geq\frac{1}{2}\abs{\omega_{i}-\omega_{j}}$
for all $i\neq j$, and so the denominator in (\ref{eq:vandemonde_inverse_entries})
is bounded.
\end{proof}

\section{Other directions and open questions\label{sec:open_questions}}

At least for processes with continuous paths on the circle $\t$ (such
as Brownian motion), it seems reasonable that it's possible to reconstruct
functions which are more complicated than indicators. 
\begin{question}
What classes of functions are reconstructible from Brownian motion,
with or without drift? Given a nice enough function $f:\t\to\r$,
is it perhaps possible to stitch together its level sets $\left\{ f\geq\alpha\right\} $,
which we know are reconstructible due to Theorems \ref{thm:general_reconstruction}
/ \ref{thm:symmetric_reconstruction}, to gain knowledge about the
entire function?
\end{question}

\begin{question}
Find another algorithm which shows directly that Brownian motion can
reconstruct $\mathcal{F}_{d}$, using its local properties. 
\end{question}

Theorem \ref{thm:symmetric_reconstruction} suggests that stochastic
processes with symmetries should allow reconstruction, up to some
symmetry of $\t^{d}$ itself. 
\begin{question}
Suppose that some coordinates of $X_{t}$ are independent from others,
or that $X_{t}$ is invariant under some orthogonal transformation
in $O_{\r}\left(d\right)$. What can be said about reconstruction
on $\t^{d}$?
\end{question}

It is natural to consider larger spaces of functions, larger spaces,
and more general distributions.
\begin{question}
What can be said for sets $\Omega$ with fat boundary, i.e $\mu\left(\partial\Omega\right)>0$?
\end{question}

\begin{question}
Can Theorem \ref{thm:general_reconstruction} be extended to general
compact Riemannian manifolds?
\end{question}

\begin{question}
How do the results extend to processes whose step sizes are allowed
to contain atoms at $x\neq0$? (consider, for example, a Poisson process
which, when its clock fires, jumps either by $\alpha_{1}$ or $\alpha_{2}$,
with $\left\{ \alpha_{1},\alpha_{2},\pi\right\} $ rationally independent).
\end{question}

Finally, the independence condition of the drift in Corollary \ref{cor:brownian_motion_can_feel}
gives rise to a slightly different model for reconstruction on the
torus, where we try to learn $f$ from its values on a random (irrational)
geodesic.
\begin{question}
Let $v$ be a uniformly random unit vector in $\r^{d}$. Which classes
of functions $f$ can be reconstructed (with probability 1) from $f\left(t\cdot v\right)_{t\in\r}$?
How about $f\left(X_{t}\right)$, where $X_{t}$ is Brownian motion
with random drift $v_{t}$?
\end{question}

\bibliographystyle{plain}
\bibliography{brownian_motion_can_feel_the_shape_of_a_drum}

\end{document}